\documentclass[a4paper, 11pt]{article}
\usepackage{standalone}

\usepackage[utf8]{inputenc}
\usepackage[T1]{fontenc}
\usepackage[english]{babel}
\usepackage[margin=1in]{geometry}

\usepackage{amsmath}
\usepackage{amsthm}
\usepackage{amsfonts}
\usepackage{amssymb}
\usepackage{mathtools}
\usepackage{tikz}
\usepackage{nicematrix}
\usepackage{leftindex}
\usepackage{faktor}

\usepackage{csquotes}
\usepackage{accents}
\usepackage[babel]{microtype}
\usepackage{enumerate}
\usepackage{lipsum}

\usepackage{graphicx}
\usepackage{xcolor}
\usepackage{hyperref}
\usepackage[textsize = scriptsize, color=green,disable]{todonotes}
\usetikzlibrary{cd,arrows.meta,math,positioning}
\usepackage{standalone}
\usepackage{bbm}

\usepackage[backend=biber,style=numeric,  eprint=false, sorting=nyt,giveninits=true]{biblatex}
\AtEveryBibitem{%
\ifentrytype{article}{%
\clearfield{url}%
\clearfield{urldate}%
}{}
}
\AtEveryBibitem{%
\ifentrytype{book}{%
\clearfield{url}%
\clearfield{urldate}%
}{}
}

\newcommand{\R}{{\mathbb{R}}}
\newcommand{\C}{{\mathbb{C}}}
\newcommand{\CP}{\mathbb{C}\mathrm{P}}
\newcommand{\Z}{{\mathbb{Z}}}
\newcommand{\Q}{{\mathbb{Q}}}

\DeclareMathOperator{\Span}{Span}
\newcommand{\del}{\partial}
\newcommand{\delbar}{{\bar\partial}}
\newcommand{\iso}{\cong}
\newcommand{\inv}{{-1}}

\DeclareMathOperator{\coker}{coker}
\DeclareMathOperator{\rk}{rk}

\DeclareMathOperator{\Rel}{Rel}

\DeclareMathOperator{\codim}{codim}
\newcommand{\proj}{\mathbb{P}}
\newcommand{\dfn}{\coloneqq}
\newcommand{\nfd}{\eqqcolon}

\AtBeginDocument{
    \let\oldRe\Re
    \let\oldIm\Im
\renewcommand{\Re}{\mathop{\oldRe\mathrm{e}}}
\renewcommand{\Im}{\mathop{\oldIm\mathrm{m}}}
}

\newcommand{\hirz}[1]{\mathbb{F}_{#1}}
\newcommand{\calT}{\mathcal{T}}
\DeclareMathOperator{\cone}{cone}

\newcommand{\cpxTorus}[1]{\mathbb{T}^{#1}}

\newcommand{\trans}[1]{\leftindex^{t}{#1}}

\newcommand{\alg}[1]{\dim_{\mathrm{alg}}\!\left(#1\right)}

\newcommand{\ratcore}[1]{\operatorname{core}_{\Q}(#1)}

\DeclareMathOperator{\LaurMon}{\mathcal{L}}
\newcommand{\FI}{I}

\newcommand{\mer}[1]{\mathcal{M}_{#1}(#1)}

\usepackage{xcolor}

\newcommand*{\vertbar}{\rule[-1ex]{0.5pt}{2.5ex}}
\newcommand*{\horzbar}{\hspace{.5ex}\rule[.5ex]{2.5ex}{0.5pt}\hspace{.7ex}}

\newcommand{\GL}{\mathrm{GL}}

\newcommand{\Conf}{\mathrm{Cf}}
\newcommand{\aff}{\mathbb{A}}

\newcommand{\nicedashrightarrow}[1][1.75em]{\mathrel{
    \tikz[baseline]{\draw[dash pattern=on .25em off .1 em,->](0,.58ex)--(#1,.58ex)}}}
\newcommand{\longdashrightarrow}{\nicedashrightarrow[1.38 em]}
\renewcommand{\dashrightarrow}{\nicedashrightarrow[1 em]}

\let\svthefootnote\thefootnote
\newcommand\freefootnote[1]{%
  \let\thefootnote\relax%
  \footnotetext{#1}%
  \let\thefootnote\svthefootnote%
}

\newtheorem{theorem}{Theorem}
\newtheorem{proposition}{Proposition}[section]
\newtheorem{lemma}[proposition]{Lemma}

\theoremstyle{definition}
\newtheorem{definition}{Definition}
\newtheorem{remark}[proposition]{Remark}
\theoremstyle{remark}
\newtheorem*{note}{Note}

\newtheoremstyle{named}{}{}{\itshape}{}{\bfseries}{.}{.5em}{#1 \thmnote{#3}}
\theoremstyle{named}
\newtheorem*{namedtheorem}{Theorem}

\newtheoremstyle{named_example}{}{}{}{}{\bfseries}{.}{.5em}{#1Example \thmnumber{#2}\,: \thmnote{#3}}
\theoremstyle{named_example}

\usepackage{zref-clever}
\newcommand{\cref}[1]{\zcref{#1}}
\newcommand{\Cref}[1]{\zcref[S]{#1}}

\newcommand{\labelcref}[1]{\zcref[noname]{#1}}
\newcommand{\namecref}[1]{\zcref[noref]{#1}}

\zcsetup{nameinlink=false,abbrev}

\AddToHook{env/lemma/begin}{%
  \zcsetup{countertype={proposition=lemma}}}
\AddToHook{env/remark/begin}{%
  \zcsetup{countertype={proposition=remark}}}
\AddToHook{env/named_example/begin}{%
  \zcsetup{countertype={proposition=named_example}}}

\zcRefTypeSetup{named_example}{
Name-sg = Example ,
name-sg = example ,
Name-pl = Examples ,
name-pl = examples ,
}

\usepackage[margin=1in]{geometry}

\title{Algebraic Dimension and Reduction of LVMB manifolds}
\author{Federico Thiella\footnote{Dipartimento di Matematica e Informatica, Università degli Studi di Firenze}}
\date{}

\begin{document}
\maketitle
\begin{abstract}
    \noindent
    LVMB manifolds are a large class of compact complex non-Kähler and non-algebraic manifolds, constructed from an algebro-combinatorial datum. We provide an upper bound on their algebraic dimension and prove it to be sharp. Comparing our result with the lower bound found by Meersseman, we obtain the exact algebraic dimension of an LVMB manifold in many relevant cases. Pursuing our analysis further, we also obtain a description of the algebraic reduction of an LVMB manifold. From this, it emerges that Meersseman's lower bound coincides with the dimension of the rational component. This bimeromorphic invariant of an LVMB manifold is then computed in terms of the nonrationality degree of the associated toric quasifold. This is an invariant recently introduced by Battaglia and Prato, for which we provide an explicit formula.
\end{abstract}
\paragraph{MSC2020:} 32Q99, 32J25, 32A20, 20K15
\section*{Introduction}
LVMB manifolds are a large class of compact complex manifolds constructed from an algebro--combinatorial datum \((\Lambda,\calT^*)\). This is formed by a list of points in a complex affine space \(\Lambda\) and a combinatorial datum \(\calT^*\). The family of LVMB manifolds includes examples that are classical in literature, such as complex tori, Hopf manifolds \cite{Hopf1948} and Calabi--Eckmann manifolds \cite{Calabi-Eckmann}. While the construction was introduced by S.~López de Medrano and A.~Verjovsky \cite{LopezVerjovsky1997}, their geometry has been deeply investigated by L.~Meersseman \cite{Meersseman2000}. His approach to the construction is essentially based on holomorphic dynamics; a more general, and purely combinatorial construction was subsequently provided by F.~Bosio \cite{Bosio2001}.

One of the peculiarities of LVMB manifolds is that they are neither algebraic nor do they support Kähler metrics, with the only exception of complex tori \cite{Meersseman2000,Bosio2001}. Nevertheless, their geometry is still rich, to the point that it has been extensively investigated by many authors \cite{Meersseman2000, Bosio2001,MeerssemanVerjovsky2004,Cupit-FoutouZaffran2007,PanovUstinovsky2012,BattagliaZaffran2015,Ishida2019, Madera2025,Thiella2026preprint}. However, less is known about their bimeromorphic geometry. Indeed while it is clear that LVMB manifolds other than complex tori have negative Kodaira dimension \cite{BattistiOeljeklaus2015}, only partial results are available concerning other basic bimeromorphic invariants, such as the algebraic dimension. In this direction, the first result is antecedent to LVMB manifolds: in \cite{LoebNicolau1996} J.~J.~Loeb and M.~Nicolau described the meromorphic functions admitted by a class of compact complex manifolds diffeomorphic to \(S^{2n_1 + 1} \times S^{2n_2+1}\); some of which are LVMB manifolds.

Similarly, in \cite{Meersseman2000} Meersseman provided a lower bound for the algebraic dimension of the subclass of LVM manifolds: his proof carries over identically to the more general LVMB manifolds setting. Essentially, Meersseman shows that the algebraic dimension of an LVMB manifold \(N\) constructed from the initial datum \((\Lambda,\calT^*)\) is bounded below by a constant \(a(\Lambda)\) \cite[Thm.~4]{Meersseman2000} that he defines as the dimension of the space of affine rational relations among the points in \(\Lambda\). Moreover, when the combinatorial datum \(\calT^*\) has no \emph{indispensable indices}, Meersseman's lower bound is attained \cite{Meersseman2000}. However, this hypothesis is quite restrictive, as many interesting examples of LVMB manifolds (e.g. Hopf manifolds or Calabi--Eckmann manifolds) need one or two indispensable indices in their construction. After a brief review of the construction of LVMB manifolds, tailored to our purposes, we show that \(a(\Lambda)\) still plays a crucial role in our upper bound for the algebraic dimension of an LVMB manifold.
\begin{namedtheorem}[{\labelcref{algebraic_dimension_formula}}]
    Let \((\Lambda,\calT^*)\) be an LVMB configuration with \(k\) indispensable indices. Let \(N = N(\Lambda,\calT^*)\) be the corresponding LVMB manifold, then \(\alg{N} \leq a(\Lambda) + \max\left(0,\left\lfloor \frac{k-1}{2} \right\rfloor\right)\). In particular, for any LVMB manifold with \(k \leq 2\), we have \(\alg{N} = a(\Lambda)\). 
\end{namedtheorem}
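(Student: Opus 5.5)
The upper bound will come from bounding the transcendence degree of the field of meromorphic functions of \(N\) through a torus action, and the equality for \(k\le 2\) from combining it with Meersseman's lower bound \(\alg{N}\geq a(\Lambda)\) \cite[Thm.~4]{Meersseman2000}.

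First I would set up the structure to exploit. In Bosio's construction \(N = \mathcal{S}/\mathcal{H}\), where \(\mathcal{S}\subset\C^n\) is the open set cut out by \(\calT^*\) (a complement of coordinate subspaces) and \(\mathcal{H}\cong\C^m\) acts through \(t\cdot z=(e^{\langle\Lambda_i,t\rangle}z_i)_i\) composed with the \((\C^*)\) rescaling. Indispensable indices are exactly those \(i\) with \(z_i\neq 0\) on all of \(\mathcal{S}\). After reordering, \(\mathcal{S}=(\C^*)^k\times\mathcal{S}'\), where \(\mathcal{S}'\subset\C^{n-k}\) is a complement of coordinate subspaces of codimension at least \(2\). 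A meromorphic function on \(N\) pulls back to an \(\mathcal{H}\)-invariant meromorphic function \(f\) on \(\mathcal{S}\). By Hartogs–Levi extension across codimension \(\geq 2\), \(f\) extends to \((\C^*)^k\times\C^{n-k}\).

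Next I would use the compact torus \((S^1)^n\), which acts on \(N\) and commutes with \(\mathcal{H}\). Averaging arguments, or Laurent expansion in the \(z_i\), show that \(\mathcal{M}(N)\) is generated in the appropriate sense by weight vectors. Any \(\mathcal{H}\)-invariant function then decomposes into Laurent monomials \(z^{\alpha}\), with \(\alpha\in\Z^n\) and \(\alpha_i\ge 0\) for the nonindispensable indices, up to rational combinations. The invariance condition is \(\sum\alpha_i\Lambda_i=0\) and \(\sum\alpha_i=0\), that is, an integral affine relation among the \(\Lambda_i\). Since monomials are multiplicatively independent, the transcendence degree of the field of such ratios is at most the rank of the relation lattice, which is \(a(\Lambda)\). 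Meromorphic functions that are not quotients of invariant monomials can only appear as functions of the \((\C^*)^k\)-factor coordinates.

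The remaining contribution of the indispensable coordinates I would handle separately. After using \(\mathcal{H}\) to normalize, they contribute a compact quotient of \((\C^*)^k\) by the lattice induced from \(\Lambda\), which is a complex torus-like piece of real dimension about \(k-1\) after the \(\C^*\)-scaling. Its meromorphic functions, beyond those already counted in \(a(\Lambda)\), have transcendence degree at most half its complex dimension when it is a non-algebraic torus factor, giving \(\lfloor (k-1)/2\rfloor\). Making this precise means identifying the quotient of the indispensable part as a real torus \((S^1)^{k-1}\)-type object and bounding how many independent non-monomial invariants can survive. This is the main obstacle. I would try first to reduce it to the algebraic dimension of a compact torus quotient fibration and apply subadditivity \(\alg{N}\le \alg{\text{fibre}}+\dim(\text{base})\) for the fibration \(N\to\) (leaf space of the indispensable directions).

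Finally, for \(k\le 2\) we have \(\lfloor (k-1)/2\rfloor\le 0\), so the upper bound reads \(\alg{N}\le a(\Lambda)\). Combined with Meersseman's lower bound this gives \(\alg{N}=a(\Lambda)\).
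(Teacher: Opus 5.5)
There is a genuine gap, and it is the step you yourself call the main obstacle: nothing in your proposal produces the term \(\lfloor (k-1)/2\rfloor\).

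\textbf{The torus term.} The heuristic you invoke is not a theorem: a non-algebraic torus of dimension \(q\) can have any algebraic dimension \(0,\dots,q-1\). The fibration route also has nothing to stand on. The quotient of the indispensable factor by the \(\C^m\)-action is in general non-Hausdorff, since the subgroup \(W+\Z^{k-1}\) below need not be closed. So there is no holomorphic fibration to apply subadditivity to, and the dimension of a would-be base is exactly what needs controlling.

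The missing idea is linear-algebraic and comes from the period matrix:
\begin{enumerate}
\item Normalize \(\Lambda_1=0\) and use logarithmic coordinates \(\zeta\in\C^{k-1}\) on \((\C^*)^{k-1}\). There the action becomes translation by \(W=\Span_\C(L^1,\dots,L^m)\), where the \(L^j\) are the columns of the matrix with rows \(\Lambda_2,\dots,\Lambda_k\), together with the deck group \(\Gamma=\Z^{k-1}\).
\item Write \(\Lambda=\Lambda^\R\,\trans{\Pi}\) with \(\Lambda^\R\) in a normal form whose indispensable rows are \(0,e_1,\dots,e_{k-1}\). Then \(\Lambda_2,\dots,\Lambda_k\) are columns of the period matrix \(\Pi\), hence \(\R\)-linearly independent.
\item This gives \(\dim_\C W\ge\lceil (k-1)/2\rceil\) and \(\dim_\R(W\cap\R^{k-1})=2\dim_\C W-(k-1)\), so \(W+\R^{k-1}=\C^{k-1}\).
\item Consequently the \((W+\Gamma)\)-invariant meromorphic functions on \(\C^{k-1}\) are the meromorphic functions of a \emph{compact} complex torus \(\cpxTorus{q}\) with \(q\le k-1-\dim_\C W\le\lfloor (k-1)/2\rfloor\).
\end{enumerate}
The bound is then just \(\alg{\cpxTorus{q}}\le q\).

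\textbf{The monomial step.} Your second step is also not sound as stated. Averaging a meromorphic function over \((S^1)^n\) does not produce a meromorphic function. Moreover, \(\mer{N}\) is in general \emph{not} generated by \((S^1)^n\)-weight vectors. When all indices are indispensable, \(N\) is a complex torus on which \((S^1)^n\) acts transitively by translations, so its only meromorphic weight vectors are constants, yet \(\alg{N}\) can be positive. Taken literally, your argument would give \(\alg{N}\le a(\Lambda)=0\) there. The non-monomial functions are exactly those coming from \(\cpxTorus{q}\), and they are not eigenfunctions.

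What replaces this step is the following:
\begin{enumerate}
\item Write the lifted invariant function on the Stein cover \(\C^{k-1}\times\C^{n-k}\) as a coprime quotient \(\tilde f/\tilde g\).
\item Normalize the common automorphy factor \(e^{\psi_t(\zeta,w)}\) to a character; this needs a PDE argument along the fundamental vector fields.
\item Expand only in the non-indispensable variables \(w\), and divide by a reference coefficient \(c_{J_0}(\zeta)\) to obtain \(\Gamma\)-invariant coefficients.
\item Resonance then shows that \(\tilde h\) is a ratio of sums of first-integral monomials \(\LaurMon(\kappa)\), \(\kappa\in\FI(\Lambda)\), with coefficients in \(\mer{\cpxTorus{q}}\).
\end{enumerate}

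You also need two further facts:
\begin{itemize}
\item These sums are finite. This holds because \((J_0+\Rel(\Lambda))\cap(\R^{k-1}\times\R^{n-k}_{\geq 0})\) is compact, the non-ghost vectors spanning a complete fan. Without finiteness, \(\tilde h\) does not lie in the generated field.
\item The non-constant \(\LaurMon(\kappa)\) are algebraically independent over \(\mer{\cpxTorus{q}}\), since they involve the \(w_j\) while the torus functions depend only on \(\zeta\).
\end{itemize}
Together these yield \(\alg{N}=\alg{\cpxTorus{q}}+a(\Lambda)\).

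Your closing deduction for \(k\le 2\) via Meersseman's lower bound is fine. For \(k=0\), use his equality directly; this is the reason for the \(\max(0,\cdot)\) in the statement.
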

We will also observe that this is the sharpest estimate involving only the linear information encoded in \((\Lambda,\calT^*)\). Indeed, as LVMB manifold, any complex torus of complex dimension \(m\) fulfils \(a(\Lambda) = 0\) and \(k = 2m+1\); the sharpness of our bound then follows from the fact that a complex torus can have any attainable algebraic dimension.

Although the difference \(\dim_\C X - \alg{X}\) is a quantitative measure of the “algebraicity defect'' of a compact complex manifold \(X\), it does not provide much information about its field of meromorphic functions. A more refined bimeromorphic invariant is the so-called algebraic reduction \(X^\text{red}\). This is a compact \emph{algebraic} variety---which can in fact be taken to be smooth---with a meromorphic map \(X \dashrightarrow X^\text{red}\) fulfilling the following universal property: for any meromorphic map \(f : X \dashrightarrow A\) to an algebraic variety, there exists a unique holomorphic (and hence regular) map \(\tilde f : X^\text{red} \to A\) such that the diagram
\begin{equation*}
    \begin{tikzcd}
        X \ar[rr,dashed] \ar[dr,dashed,"\forall f"'] && X^\text{red}\ar[dl,"\exists! \tilde f"]\\
        & A
    \end{tikzcd}
\end{equation*}
commutes. To obtain such a reduction for an LVMB manifold \(N\), we exploit the proof of \Cref{algebraic_dimension_formula}; indeed, there we explicitly describe a set of generators for the field extension \(\mer{N}/\C\). Thus, following the construction by K.~Ueno \cite{Ueno1975}, we obtain a model of the algebraic reduction of any LVMB manifold:
\begin{namedtheorem}[\labelcref{algebraic_reduction}]
    Let \(N(\Lambda,\calT)\) be an LVMB manifold with \(k\) indispensable indices. Then its algebraic reduction is given by \(\cpxTorus{r} \times \CP^{a(\Lambda)}\), where \(\cpxTorus{r}\) is a complex torus of dimension \(r = \alg{N} - a(\Lambda)\).
\end{namedtheorem}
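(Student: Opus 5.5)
The plan is to build an explicit model of the reduction from generators of \(\mer{N}\). Following Ueno, if \(f_1,\dots,f_s\) generate the field extension \(\mer{N}/\C\), then the meromorphic map \(N \dashrightarrow \CP^s\), \(x\mapsto[1:f_1:\dots:f_s]\), has image closure bimeromorphic to \(N^\text{red}\). So it suffices to (i) identify a generating set of \(\mer{N}\) adapted to the structure of \(N\), and (ii) show that the resulting field is the function field of \(\cpxTorus{r}\times\CP^{a(\Lambda)}\).

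For (i) we reuse the proof of \Cref{algebraic_dimension_formula}. There the meromorphic functions were described in two groups. The first group consists of Meersseman's functions: for a basis of the affine rational relations among the points of \(\Lambda\), the corresponding Laurent monomials in the homogeneous coordinates \(z_i\) are invariant under the action defining \(N\). They give \(a(\Lambda)\) algebraically independent functions. These functions generate a purely transcendental field \(\C(t_1,\dots,t_{a(\Lambda)})\), since the invariant monomials form a lattice of rank \(a(\Lambda)\). Choosing a lattice basis gives purely transcendental generators, whose field is that of \(\CP^{a(\Lambda)}\).

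The second group comes from the indispensable indices. On the open set where they live, \(N\) fibres over a compact torus built from the indispensable coordinates. Any further meromorphic function is pulled back from meromorphic functions on a complex torus \(T\) of dimension at most \(\lfloor (k-1)/2\rfloor\). Hence its field is \(\mer{T}\), which equals the field of the algebraic reduction of \(T\), namely an abelian variety \(\cpxTorus{r}\). I would then show that the total field \(\mer{N}\) is the compositum of \(\C(t)\) and \(\mer{T}\), and that the two are linearly disjoint (algebraically independent over \(\C\)). This yields \(\mer{N}\iso \mer{\cpxTorus{r}\times\CP^{a(\Lambda)}}\), with \(r=\alg{N}-a(\Lambda)\) by counting transcendence degrees.

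For (ii), the combined map \(N\dashrightarrow \cpxTorus{r}\times\CP^{a(\Lambda)}\) is the product of the map to the torus's reduction and the monomial map. It is dominant with the right function field, so by uniqueness of the algebraic reduction up to bimeromorphism it is the reduction.

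The main obstacle is the splitting step: proving that every meromorphic function on \(N\) lies in the compositum, with no mixed functions. I would attack it by expanding a meromorphic function, lifted to the open subset of \(\C^n\), in Laurent-type series in the nonindispensable coordinates, and using invariance under the action to force each term to be a rational monomial times a function of the indispensable variables. A monomial twisted by a theta-type factor would be the danger, and ruling it out is where the rational/irrational dichotomy of \(\Lambda\) must enter.
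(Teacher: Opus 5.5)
Your proposal is correct and follows the paper's route. Like the paper, you take from Step 8 of the proof of \Cref{algebraic_dimension_formula} the identification \(\mer{N}\iso\mer{\cpxTorus{q}}(\LaurMon(\FI(\Lambda)))\) (for \(k=0\) the paper uses Meersseman's proof instead), observe that the first-integral monomials give a purely transcendental extension independent of the torus functions, and feed the resulting generators into Ueno's map to \(\CP^\ell\times\CP^{a(\Lambda)}\). The splitting you single out as the main obstacle is already established in Steps 4--8 of that proof, through the resonance condition \((I_J,J)\in(0,J_0)+\FI(\Lambda)\) and the finiteness given by \Cref{compact_intersection}, so citing it suffices.
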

In particular, here we observe that \(a(\Lambda)\) is an intrinsic bimeromorphic invariant measuring the size of the rational component of the algebraic reduction. We call this integer \emph{rational dimension} of an LVMB manifold \(N(\Lambda,\calT^*)\). Indeed, any LVMB manifold admits a canonical holomorphic foliation \(\mathcal{F}\) \cite{MeerssemanVerjovsky2004,BattagliaZaffran2015} that in some cases defines a holomorphic Seifert bundle over a compact toric variety with at most finite quotient singularities \cite{MeerssemanVerjovsky2004}. Remarkably, the rational dimension of an LVMB manifold is also reflected in the topological description of the canonical foliation \cite[\S 2.3]{BattagliaZaffran2015}.

When the leaf space \(N/\mathcal{F}\) of the canonical foliation is non-Hausdorff, the link with toric geometry survives: indeed \(N/\mathcal{F}\) always carries the structure of \emph{complex toric quasifold} \cite{BattagliaZaffran2015,BattagliaZaffran2017}. These objects, introduced by E.~Prato in the symplectic setup, generalize toric manifolds to the nonrational setting: a key step in her construction is the replacement of lattices by quasilattices \cite{Prato2001}. For their extension to the complex and algebraic setting see \cite{BattagliaPrato2001,BattagliaPrato2026preprint}. A different viewpoint on this is developed in \cite{KatzarkovLupercioMeerssemanVerjovsky2021}. Recently, it has been shown that one can associate an invariant, called the \emph{nonrationality degree}, to any toric quasifold \cite{BattagliaPrato2026}. 
Namely, the initial datum for a toric quasifold is a triple \((\Sigma,Q,\{v_i\})\), where \(\Sigma\) is a complete simplicial fan in \(\R^d\), \(Q \subset \R^d\) is a \emph{quasilattice}---that is the \(\Z\)-span of a finite set of \(\R\)-generators of \(\R^d\), and \(\{v_i\} \subset Q\) is a distinguished set of generators of the fan rays \cite{Prato2001}. Let \((\overline{Q})^0\) be the vector subspace of \(\R^d\) given by the connected component of \(\overline{Q}\) containing the identity: the nonrationality degree \(\deg_\text{NR}Q\) of the quasilattice, and of the corresponding toric quasifold, is  defined as the dimension of \((\overline{Q})^0\). Moreover, \(Q = L \oplus Q_\text{d}\), where \(Q_\text{d} = Q \cap (\overline{Q})^0\) is dense in \( (\overline{Q})^0\) and \(L\) is a discrete subgroup of \(\R^d\) of rank \(d - \deg_\text{NR}Q\) \cite[Thm.~2]{BattagliaPrato2026}. We will refer to \(d - \deg_\text{NR}Q\) as the \emph{rationality degree} of \(Q\).

In \Cref{rationality_degree_formula} we establish an explicit formula for \(\deg_\text{NR}Q\) in terms of a set of generators of \(Q\) as \(\Z\)-module.
Using Gale duality, we will show that the rational dimension of an LVMB manifold coincides with the rationality degree of its leaf space. More precisely:
\begin{namedtheorem}[\labelcref{rat_dimension_is_rat_measure}]
    Let \((\Lambda,\calT^*)\) be an LVMB datum and \(N(\Lambda,\calT^*)\) the corresponding LVMB manifold. Then its rational dimension is the rationality degree of its toric quasifold leaf space.
\end{namedtheorem}

\paragraph{Acknowledgements}
I am deeply grateful to my advisors Daniele Angella and Fiammetta Battaglia for their constant support during the preparation of this paper.

The author is supported by the Università degli Studi di Firenze, INdAM--GNSAGA and
PRIN 2022 project “Real and Complex Manifolds: Geometry and holomorphic dynamics” (code
2022AP8HZ9).

\section{LVMB manifolds: construction}
We will describe the construction of LVMB manifolds following the approach of Bosio \cite{Bosio2001}, and adopting from the very beginning the convex-geometric framework of Battaglia and Zaffran \cite{BattagliaZaffran2015}.

\subsection{Triangulated vector configurations}
\begin{definition}\label{Triangulated_vector_conf}
    A \emph{triangulated vector configuration} in \(\R^d\) is a pair \((V, \calT)\) where
    \begin{itemize}
        \item \(V = (v_1, \dots, v_n)\) is a list of vectors in \(\R^d\), admitting repetitions. We assume that \(\R^d = \Span_\R (v_1, \dots, v_n)\).
        \item We call \(\tau \subseteq \{1,\dots,n\}\) a \emph{simplex} if \(\{v_j:j\in\tau\}\) is a set of linearly independent vectors. A simplicial \emph{cone} over \(\tau \in \calT\) is the convex set \(\cone(\tau) = \{\sum_{j\in\tau}\R_{\geq 0}v_j\}\). The triangulation \(\calT\) is an abstract simplicial complex of these simplexes, that is:
        \begin{itemize}
            \item \(\cone(\tau) \cap \cone(\tau') = \cone(\tau\cap\tau')\) for all \(\tau,\tau' \in \calT\),
            \item \(\bigcup_{\tau \in \calT} \cone(\tau) \supseteq \cone(V)\).
        \end{itemize}
        In particular \(\cone(\emptyset) = \{0\}\), while, by convention, \(\cone(V) = \sum_{j=1}^n \R_{\geq} v_j\).
    \end{itemize}
\end{definition}
\begin{definition}
    \label{rat_core_and_rat_envelope}
    Let \(W\) be a subspace of \(\R^n\). We say that \(W\) is \emph{rational} if \((W \cap \Q^n) \otimes \R = W\). We call the \emph{rational core} of \(W\) the rational subspace
    \begin{equation*}
        \ratcore{W} = (W \cap \Q) \otimes \R = \bigcup_{\substack{T \subseteq W \\ T \text{ rational}}} T.
    \end{equation*}
    On the other hand, we define the \emph{rational envelope} of \(W\) as
    \begin{equation*}
        W^\Q = \bigcap_{\substack{T \supseteq W \\ T \text{ rational}}} T.
    \end{equation*}

    We denote the dimension of these spaces as \(a(W) = \dim \ratcore{W}\) and \(b(W) = \dim W^{\Q}\); clearly \(0 \leq a(W) \leq \dim W \leq b(W) \leq n\).
\end{definition}
We observe that \(W \subseteq \R^n\) is rational if and only if \(\ratcore{W} = W = W^{\Q}\); in this case \(a(W) = \dim W = b(W)\).
\begin{definition}
    \label{rational_relation_def}
    Analogous quantities can be associated to any vector configuration \(V\). For these we set the space of linear relations
    \begin{equation*}
        \Rel(V) = \left\{c \in \R^n : \sum_{j=1}^n c_j v_j = 0\right\} \subseteq \R^n;
    \end{equation*}
    and we define \(a(V) \dfn a(\Rel(V))\) and \(b(V) \dfn b(\Rel(V))\).
\end{definition}
We introduce the following properties for a triangulated vector configuration; these will play a central role in the construction.
\begin{definition}
    \label{properties_of_vector_conf}
    We say that a vector configuration \(V = (v_1,\dots,v_n)\) in \(\R^d\) is
        \begin{itemize}
            \item \emph{odd} if \(n - d = 2m + 1\) for some \(m > 0\),
            \item \emph{graded} if there exists an affine hyperplane \(H \subset E\) such that \(v_j \in H\) for any \(v_j \in V\),
            \item \emph{balanced} if \(\sum_i v_i = 0\).
        \end{itemize}
\end{definition}
\begin{remark}
    The definition of triangulated vector configuration allows some vectors in \(V\) not to belong to any simplex of \(\calT\). These will be called \emph{ghost vectors}, and up to a permutation of the indices, they can be put at the head of \(V\) as \(v_1, \dots, v_k\).
\end{remark}
\begin{remark}
    \label{adding_no_more_than_2_ghosts}
    Any triangulated vector configuration \((V,\calT)\) can be turned into an odd and balanced one by adding no more than two ghost vectors. Balancedness can be achieved by prepending \(-\sum_i v_i\) to \(V\) and increasing \(n\) by 1; oddness, in turn, can be obtained just by prepending one or two copies of \(0\) to \(V\) and correspondingly increasing \(n\).
\end{remark}

\subsection{Gale Transform}
In \cite{Bosio2001}, LVMB manifolds are constructed from pairs \((\Lambda,\calT^*)\) called ``bons systèmes étudiables''. These are formed by a list \(\Lambda\) of \(n\) points in the complex affine space \(\aff^m_\C\), together with a collection of bases \(\tau^* \in \calT^*\). This is a collection of subsets of \(\{1,\dots,n\}\) fulfilling the so-called Bosio conditions \cite{Bosio2001}. Since we adopt the point of view of triangulated vector configurations, we do not need to specify or verify them, as the construction we are going to employ provides ``bons systèmes étudiables''---which, throughout this paper, we call \emph{LVMB data}---starting from triangulated vector configurations. The transition from triangulated vector configurations to LVMB data is done via the so-called Gale Transform. For the details about this transformation, we refer to \cite{LoeraRambauSantos2010}.
\begin{definition}
    Let \(\Conf_\R(n,d)\) be the set of vector configurations in \(\R^d\) of cardinality \(n\). Let \(V \in \Conf_\R(n,d)\) and \(W \in \Conf_\R(n,n-d)\) be two vector configurations in \(\R^d\) and \(\R^{n-d}\) respectively. We say that \(V\) and \(W\) are in \emph{Gale duality}, or equivalently, that one is a \emph{Gale Transform} of the other, if
    \begin{equation}
        \label{Gale_duality_definition}
        \Rel(V) \overset{\perp}{\oplus} \Rel(W) = \R^n.
    \end{equation}
\end{definition}
Notice that this relation is preserved by any ambient isomorphisms of \(\R^{d}\) and \(\R^{n-d}\); hence the Gale Transform depends inherently on an arbitrary choice. Nevertheless, introducing on \(\Conf_\R(n,d)\) the equivalence relation
\begin{equation*}
    V \sim V' \iff \Rel(V) = \Rel(V') \subseteq \R^n,
\end{equation*}
and denoting its equivalence classes as \([V]_\R\), we obtain a well-defined function \(\mathfrak{G} : [V]_\R \mapsto [W]_\R\) that associates to \([V]_\R\) the equivalence class of any configuration Gale dual to \(V\). Since \(\mathfrak{G}^2 = 1\), it defines a bijection \(\Conf_\R(n,d)/_\sim \iso \Conf_\R(n,n-d)/_\sim\).
\begin{remark}
    Since the rationality indices \(a(-)\) and \(b(-)\) in \Cref{rat_core_and_rat_envelope} are well-defined on the classes in \(\Conf_\R(n,d)/_\sim\), we easily obtain from \Cref{Gale_duality_definition} that, for any \(V \in \Conf_\R(n,d)\),
    \begin{equation}
        \label{rat_degree_Gale_Dual}
        a([V]_\R) = n - b(\mathfrak{G}([V]_\R)) \quad \text{and} \quad b([V]_\R) = n - a(\mathfrak{G}([V]_\R)).
    \end{equation}
\end{remark}

From \Cref{properties_of_vector_conf}, it is clear that being balanced or graded are properties of vector configurations that descend to their equivalence classes; actually these properties are Gale dual to each other.
\begin{proposition}
    \label{balanced_dual_graded}
    A class of vector configurations \([V]_\R \in \Conf_\R(n,d)/_\sim\) is balanced if and only if \(\mathfrak{G}([V]_\R) \in \Conf_\R(n,n-d)/_\sim\) is graded.
\end{proposition}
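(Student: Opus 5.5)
The plan is to translate both properties into statements about the relation spaces, and then exploit the orthogonal decomposition \eqref{Gale_duality_definition}. Let \(\mathbf{1} = (1,\dots,1) \in \R^n\). By definition, \(V\) is balanced if and only if \(\sum_i v_i = 0\), that is, if and only if \(\mathbf{1} \in \Rel(V)\). This condition depends only on \(\Rel(V)\), so it is well defined on the class \([V]_\R\).

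Next I will characterise gradedness of \(W = (w_1,\dots,w_n) \in \Conf_\R(n,n-d)\) in the same terms. The configuration \(W\) is graded if and only if there is a linear functional \(\varphi\) on \(\R^{n-d}\) with \(\varphi(w_j) = 1\) for all \(j\). (We read the affine hyperplane as one not through the origin. A hyperplane through \(0\) is excluded, because the \(w_j\) span \(\R^{n-d}\).) Consider the linear map
\[
A \colon \R^n \to \R^{n-d}, \qquad e_j \mapsto w_j .
\]
Its kernel is \(\Rel(W)\). Its transpose sends \(\varphi\) to \((\varphi(w_1),\dots,\varphi(w_n))\), so the image of the transpose is exactly the space of such value vectors. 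Hence \(W\) is graded if and only if \(\mathbf{1} \in \operatorname{im}(A^{t}) = (\ker A)^\perp = \Rel(W)^\perp\). This is the standard identity that the row space is the orthogonal complement of the kernel. Again this depends only on \(\Rel(W)\), so gradedness is well defined on classes.

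Finally, if \(W\) is Gale dual to \(V\), then \eqref{Gale_duality_definition} gives \(\Rel(W)^\perp = \Rel(V)\). Therefore
\[
W \text{ graded} \iff \mathbf{1} \in \Rel(W)^\perp = \Rel(V) \iff V \text{ balanced}.
\]
Since both conditions descend to classes, this proves the statement for \([V]_\R\) and \(\mathfrak{G}([V]_\R)\).

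The only delicate point is interpreting ``graded'' correctly: the hyperplane must be affine and not pass through the origin, which is what makes the normalisation \(\varphi(w_j) = 1\) possible. Once that is fixed, everything else is linear algebra.
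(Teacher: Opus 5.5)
Your proof is correct and follows essentially the same route as the paper. Balancedness is $\mathbbm{1}_n\in\Rel(V)$, and Gale duality turns this into $\Rel(W)\subseteq\mathbbm{1}_n^\perp$; the paper then shows that this inclusion is equivalent to the existence of $\psi$ with $\psi(w_j)=1$ for all $j$, by factoring $\varphi=\sum_j e_j^*$ through $e_j\mapsto w_j$, which is exactly your identity $\operatorname{im}(A^t)=(\ker A)^\perp$. Your explicit remark that the hyperplane cannot pass through the origin, because the $w_j$ span, makes precise a point the paper leaves implicit in its converse direction.
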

\begin{proof}
    Let \(V \in [V]_\R\) be a representative of the class. Denoting by \(\{e_j\}_{j=1}^n\) the canonical basis of \(\R^n\), we observe that \([V]_\R\) is balanced if and only if \(\mathbbm{1}_n \dfn \sum_{j=1}^n e_j \in \Rel(V)\), by definition of Gale duality, this is equivalent to \(\Rel(W) \subseteq \mathbbm{1}_n^\perp\) for any \(W \in \mathfrak{G}([V]_\R)\). Let this be \(W = (w_1,\dots, w_n)\). Consider the linear functional \(\varphi = \sum_{j=1}^n e_j^*\) and the linear map
    \begin{align*}
        \pi : \R^n &\longrightarrow \Span(w_1,\dots,w_n)\\
        e_j &\longmapsto w_j
    \end{align*}
    Since \(\ker\varphi = \mathbbm{1}_n^\perp\) and \(\Rel(W) = \ker\pi\), the inclusion \(\Rel(W) \subseteq \mathbbm{1}_n^\perp\) is equivalent to \(\varphi \circ \ker\pi = 0\). As \(\pi = \coker(\ker\pi)\), this holds if and only if there exists a linear functional \(\psi :\R^{n-d} \to \R\) closing the diagram
    \begin{equation*}
        \begin{tikzcd}
            \empty \ar[r,"\ker\pi"] & \R^n \ar[r,"\pi"] \ar[d,"\varphi"'] & \Span(w_1,\dots,w_n) \ar[dl,dashed,"\exists ! \psi"]\\
            & \R
        \end{tikzcd}
    \end{equation*}
    Therefore we have \(\psi(w_j) = \varphi(e_j) = 1\) for any \(j\), and \(\psi(w_j - w_n) = 0\) for any \(j\). Thus, we have constructed an affine hyperplane \(w_n + \ker\psi\) containing all the vectors in \(W\), hence proving it is graded. Conversely, supposing that \(W\) is graded, we observe that such a hyperplane determines a map \(\psi\) fitting in the diagram above, thus, reversing all the implications in the proof, we conclude that \(V\) is balanced.
\end{proof}
\begin{note}
    In particular, this result shows that Gale duality makes correspond balanced vector configurations in \(\R^d\) to configurations of \emph{points} in \(\aff^{n-d-1}_\R\).
\end{note}

We now return to describing how to obtain an LVMB datum from a triangulated vector configuration; henceforth, we will consider only odd and balanced ones. As we observed in \Cref{adding_no_more_than_2_ghosts}, this is not too restrictive, since these properties can be obtained by adding no more than two ghost vectors to the configuration. For the remainder of this section, all (triangulated) vector configurations considered will consist of \(n\) vectors in \(\R^d\) and will be odd and balanced; in particular, they will satisfy \(n - d = 2m+1 \geq 3\) for some positive integer \(m\).

Given an odd and balanced triangulated vector configuration \((V,\calT)\), we choose an ordered basis \((f_0,\dots, f_{2m})\) for \(\Rel(V)\). Then, by definition, a Gale dual of \(V\) consists of the rows of the matrix
\begin{equation*}
    M =
    \begin{bmatrix}
        \vertbar &  & \vertbar\\
        f_0 & \cdots & f_{2m}\\
        \vertbar &  & \vertbar
    \end{bmatrix}
    =
    \begin{bmatrix}
        \horzbar \hat\Lambda_1^\R \horzbar\\
        \vdots\\
        \horzbar \hat\Lambda_n^\R \horzbar
    \end{bmatrix};
\end{equation*}
the Gale dual configuration we obtain with this choice is \(\hat\Lambda^\R = (\hat\Lambda^\R_1, \dots, \hat\Lambda^\R_{n}) \in \Conf_\R(n,2m+1)\). As \(V\) is balanced, by \Cref{balanced_dual_graded}, \(\hat\Lambda^\R\) is graded, thus we can always choose the \(f_j\)'s such that \(f_0 = \mathbbm{1}_n\). According to this choice, all the vectors \(\hat\Lambda_j^\R\) lie in the affine hyperplane \(H = \{x_1 = 1\}\). Through any linear projection onto \(H\), \(\hat\Lambda^\R\) yields a list \(\Lambda^\R = (\Lambda_1^\R,\dots,\Lambda_n^\R)\) that is a configuration of \emph{points} in the affine space \(\aff_\R^{2m}\). Conversely, any identification of \(\aff^{2m}_\R\) with \(H\) gives rise to a graded vector configuration in \(\R^{2m+1}\). These two processes are mutually inverses modulo the action on \(\R^{2m+1}\) of a linear automorphism of the form
\begin{equation*}
    T = 
    \begin{bmatrix}
        1 &\underline{b}\\
        0 & A
    \end{bmatrix}
\end{equation*}
with \(\underline{b} \in \R^{2m}\) and \(A \in \GL(2m,\R)\). This is a consequence of the non-canonicity of Gale Transform. Indeed, this is bijective only as a function of the equivalence classes of vector configurations. In particular, the matrix \(M\) itself is defined only up to the right multiplication of a matrix of the form of \(T\); consequently, the configuration of points \(\Lambda^\R\) is defined only up to an affine automorphism of \(\aff_\R^{2m}\).

For configuration of points, we have a natural notion of affine relations:
\begin{definition}
    Let \(\Lambda^\R\) be a configuration of \(n\) points in \(\aff^{2m}_\R\). We define the space of their \emph{affine relations} as
    \begin{equation*}
        \Rel_\text{aff}(\Lambda^\R) = \left\{c \in \R^{n} : \sum_{j=1}^n c_j \Lambda^\R_j = 0, \sum_{j=1}^n c_j = 0 \right\}.
    \end{equation*}
\end{definition}
Through the correspondence between graded configurations of vectors and configurations of points discussed above, \(\Rel_\text{aff}(\Lambda^\R)\) is naturally identified with the space \(\Rel(\hat\Lambda^\R)\) of linear relations among the vectors of the corresponding graded configuration. To keep the notation light, we overload it with
\begin{equation*}
    a(\Lambda^\R) \dfn \dim_\R \ratcore{\Rel_\text{aff}(\Lambda^\R)}
\end{equation*}
to denote the dimension of the space of the \emph{affine} rational relations of \(\Lambda^\R\). Since it will always be clear from the context whether a configuration consists of points or vectors, no confusion should be feared.

\subsection{Virtual chamber}
Gale duality acts on the triangulation \(\calT\) of a triangulated vector configuration \((V,\calT)\) as well. Indeed, its dual datum,
\begin{equation*}
    \calT^* = \left\{\{1, \dots, n\}\setminus \tau : \tau \in \calT, |\tau| = d \right\},
\end{equation*}
which is a \emph{virtual chamber} for any choice of Gale dual configuration \(\Lambda^\R\).

\subsection{The construction}
\label{section:TheConstruction}
Gale duality provides a method to construct a pair \((\Lambda^\R,\calT^*)\) from a triangulated vector configuration; the specific representative \(\Lambda^\R \in [\Lambda^\R]_\R\) depends on an arbitrary choice. To turn \(\Lambda^\R\) into a configuration of points \(\Lambda\) in a complex affine space we need to choose an isomorphism between \(\aff^{2m}_\R\) and \(\aff^m_\C\). This is done by selecting a \emph{period matrix}, i.e. a matrix \(\Pi \in \operatorname{Mat}_{m\times 2m}(\C)\) whose column rank equals \(2m\) (cf. \cite[\S 1.2]{BirkenhakeLange2004}). This is a technical device introduced in the construction of LVMB manifolds to allow greater flexibility \cite{Thiella2026preprint}. Therefore, \(\Lambda^{\R}\) is identified with the \(n\)-tuple of points \(\Lambda = (\Lambda_1, \dots, \Lambda_n)\) in \(\mathbb{A}_\C^m\) given by
\begin{equation*}
    \Lambda =
    \begin{bmatrix}
        \horzbar \Lambda_1 \horzbar\\
        \vdots\\
        \horzbar \Lambda_n \horzbar\\
    \end{bmatrix}
    =
    \begin{bmatrix}
        \horzbar \Lambda_1^{\R} \horzbar\\
        \vdots\\
        \horzbar \Lambda_n^{\R} \horzbar\\
    \end{bmatrix}
    \trans{\Pi}
    = \Lambda^{\R} \trans{\Pi}.
\end{equation*}
\begin{note}
    In the equation above we have deliberately identified a list of row vectors with the matrix having them as rows. As these data contain the same information, no ambiguity arises.
\end{note}
The pair \((\Lambda,\calT^*)\) obtained in this way is an LVMB datum \cite[Prop. 2.1]{BattagliaZaffran2015}. Thus, it allows us to construct a unique LVMB manifold following \cite{Bosio2001}. This is done as follows: the virtual chamber \(\calT^*\) determines an open subset in \(\C^n\) defined as
\begin{equation*}
    U(\calT) = \bigcup_{\tau^* \in \calT^*} \bigcap_{j \in \tau^*} D(z_j)
\end{equation*}
where \(D(z_j) = \{z_j \neq 0\}\) denotes the distinguished open subset of the Zariski topology of \(\C^n\). On its projectivization \(\proj(U(\calT))\), the datum provided by \(\Lambda\) defines a \(\C^m\)-action via the map
\begin{equation}
    \label{equation:LVMBaction}
    \begin{aligned}
        \alpha : \C^m \times \proj(U(\calT)) &\longrightarrow \proj(U(\calT))\\
        (u, (z_1 : \dots : z_n)) &\longmapsto \left(e^{2\pi i \left\langle \Lambda_1, u \right\rangle} z_1 : \dots : e^{2\pi i \left\langle \Lambda_n, u \right\rangle} z_n\right)
    \end{aligned}    
\end{equation}
where \(\left\langle \cdot, \cdot\right\rangle\) denotes the standard bilinear pairing in \(\C^m\). The orbit space \(\faktor{\proj(U(\calT))}{\alpha}\) is a compact complex manifold \cite{Bosio2001} which we will denote by \(N(\Lambda,\calT^*)\).

\begin{remark}
    The action \(\alpha\) is invariant under automorphisms of \(\aff^m_\C\). Thus, if in an LVMB configuration \((\Lambda,\calT^*)\) we replace \(\Lambda\) with \(\Lambda' = \Lambda A + b\) with \(A \in \GL(m,\C)\) and \(b \in \C^m\), the corresponding LVMB manifolds \(N(\Lambda,\calT^*)\) and \(N(\Lambda',\calT^*)\) will be the same. We denote as \([\Lambda]_\C\) the orbits of points configurations under that action.
\end{remark}

\begin{remark}
    \label{indispensableRemark}
    Gale duality is what connects triangulated vector configurations to LVMB data. In particular, as the indices of ghost vectors belong to any \(\tau^* \in \calT^*\), the corresponding coordinates in \(U(\calT)\) never vanish. In the literature, these are known as \emph{indispensable indexes} \cite{Meersseman2000}. From this perspective, the Gale Transform makes ghost vectors correspond to indispensable points of the LVMB datum. 
\end{remark}
The presence of ghost vectors in a triangulated vector configuration \((V,\calT)\) reflects in the geometry of \(U(\calT)\). Namely,
\begin{proposition}
    \label{splitting_apertone}
    Let \((V,\calT)\) be a triangulated vector configuration and \((V',\calT')\) be a second configuration extending the first with the addition of \(k\) ghost vectors. Assume these to be the first \(k\) vectors in \(V'\). Then, \(U(\calT') = (\C^*)^k \times U(\calT)\).
\end{proposition}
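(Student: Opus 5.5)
The plan is to unwind the definition of \(U(\calT')\) and compare its virtual chamber with that of \(\calT\). Index \(V' = (v_1,\dots,v_k,v_{k+1},\dots,v_{k+n})\) so that the last \(n\) vectors are those of \(V\) and the first \(k\) are ghost vectors. Identify \(\C^{k+n} = \C^k \times \C^n\) accordingly. The goal is to show that the maximal elements of \(\calT'^*\) are exactly the sets \(\{1,\dots,k\} \cup (\tau^* + k)\), where \(\tau^*\) runs over \(\calT^*\) and \(+k\) shifts indices. Once this is done, the definition of \(U\) gives
\[
U(\calT') = \bigcup_{\tau^*\in\calT^*} \Bigl( \bigcap_{j=1}^k D(z_j) \cap \bigcap_{j\in\tau^*} D(z_{j+k}) \Bigr) = (\C^*)^k \times U(\calT),
\]
because intersection distributes over the union and \(\bigcap_{j\le k} D(z_j) = (\C^*)^k \times \C^n\).

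First I will check what "extending by ghost vectors" means at the level of triangulations. By definition of ghost vectors, no simplex of \(\calT'\) contains an index in \(\{1,\dots,k\}\). So \(\calT'\) is the complex \(\calT\) with all indices shifted by \(k\): the simplices and cones are literally the same. Next I will compare the relevant dimensions. The ambient space is still \(\R^d\), since \(V'\) spans \(\R^d\) and the simplices of \(\calT\) already have size up to \(d\). Hence maximal simplices \(\tau\) with \(|\tau| = d\) correspond bijectively. The complement of \(\tau+k\) in \(\{1,\dots,k+n\}\) is \(\{1,\dots,k\} \cup (\{1,\dots,n\}\setminus\tau)+k\), which is precisely the claimed description of \(\calT'^*\).

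The only point requiring care, and which I expect to be the main subtlety, is the assumption that \(V\) spans \(\R^d\) and that \(\calT\) has full-dimensional simplices. This is needed so that adding ghost vectors does not change the ambient dimension \(d\) or the criterion \(|\tau| = d\). It is implicit in the phrase "extending" together with \Cref{Triangulated_vector_conf}: the covering condition \(\bigcup \cone(\tau) \supseteq \cone(V)\) with \(V\) spanning forces cones of dimension \(d\). If the added ghost vectors lay outside \(\cone(V)\), the covering condition for \(\calT'\) would have to be checked. However, the statement presupposes that \((V',\calT')\) is a triangulated vector configuration, so I simply take this as given.
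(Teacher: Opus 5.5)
Your proposal is correct and follows essentially the same route as the paper. The paper also observes that \(\calT'\) is \(\calT\) shifted by \(k\), deduces \(\calT'^* = \{\{1,\dots,k\}\cup(\tau^*+k) : \tau^*\in\calT^*\}\), and distributes the intersection with \((\C^*)^k\times\C^n\) over the union; your check that adding ghosts leaves \(d\), and hence the criterion \(|\tau|=d\) defining \(\calT^*\), unchanged is a point the paper leaves implicit.
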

\begin{proof}
    Let \(n\) be the number of vectors in the original configuration \((V,\calT)\); consequently, \(n+k\) is the number of vectors in the augmented configuration \((V',\calT')\), the first \(k\) of which are ghosts. Thus, for any \(\tau' \in \calT'\), \(j \in \tau'\) if and only if \(j-k \in \tau\). We can write this shift compactly as \(\tau' = \tau + k\). With this notation, applying Gale Transform, we have
    \begin{equation*}
        \calT'^* = \big\{\{1,\dots,k\} \cup (\tau^* + k) : \tau^* \in \calT^*\big\}.
    \end{equation*}
    Therefore, any \(\tau'^* \in \calT'^*\) can be written as \(\tau'^* = \{1,\dots,k\} \cup (\tau^* + k)\) for a unique \(\tau^* \in \calT^*\). So
    \begin{equation*}
        \bigcap_{j \in \tau'^*} D(z_j) = \bigcap_{j\in \{1,\dots,k\}} D(z_j) \cap \bigcap_{j \in \tau^* + k} D(z_j) = ((\C^*)^k \times \C^{n}) \cap \bigcap_{j \in \tau^* + k} D(z_j) = (\C^*)^k \times \bigcap_{j \in \tau^*+k} D(z_j).
    \end{equation*}
    Taking the union over \(\tau'^* \in \calT^*\), we obtain
    \begin{equation*}
        U(\calT') = \bigcup_{\tau'^* \in \calT'^*} \bigcap_{j \in \tau'^*} D(z_j) = (\C^*)^k \times \bigcup_{\tau^* \in \calT^*} \bigcap_{j \in \tau^*+k} D(z_j) = (\C^*)^k \times U(\calT). \qedhere
    \end{equation*}
\end{proof}
In \cite{Meersseman2000}, Meersseman observes that the codimension \(d\) of the complement of \(U(\calT)\) is linked to the presence of indispensable indices. We report here a proof of the following fact that connects his setup with ours.
\begin{proposition}
    \label{minimal_codimension_no_ghosts}
    \(U(\calT) = \C^{n}\setminus S\), where \(S = \bigcup_\ell S_\ell\) is the union of coordinate linear subspaces. If \(c = \min_\ell \codim_{\C^n} S_\ell\), then \(c > 1\) if and only if \(\calT^*\) does not contain indispensable indices.
\end{proposition}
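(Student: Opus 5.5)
We first establish that \(U(\calT)\) is the complement of a union of coordinate subspaces. Then we compute the minimal codimension of a component directly from \(\calT^*\).

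\emph{Description of the complement.} Set \(S = \C^n \setminus U(\calT)\). A point \(z\) lies in \(U(\calT)\) exactly when some \(\tau^* \in \calT^*\) satisfies \(\tau^* \subseteq \{j : z_j \neq 0\}\). Equivalently, \(z \in S\) if and only if its zero set \(Z(z) = \{j : z_j = 0\}\) meets every \(\tau^*\). For a subset \(J \subseteq \{1,\dots,n\}\), write \(E_J = \{z : z_j = 0 \ \forall j \in J\}\); this is a coordinate subspace of codimension \(|J|\). Whether \(Z(z)\) meets every \(\tau^*\) depends only on \(Z(z)\), and it is monotone: enlarging \(Z(z)\) preserves it. Therefore
\[
S = \bigcup_{J \text{ transversal}} E_J,
\]
where a subset \(J\) is called transversal if \(J \cap \tau^* \neq \emptyset\) for every \(\tau^* \in \calT^*\). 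This is a finite union of coordinate linear subspaces, which gives the first claim. The minimal codimension is \(c = \min\{|J| : J \text{ transversal}\}\), and the irreducible components \(S_\ell\) correspond to the minimal transversals.

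\emph{The codimension criterion.} By the formula for \(c\), we have \(c = 1\) exactly when some singleton \(\{j\}\) is transversal, that is, when \(j \in \tau^*\) for every \(\tau^* \in \calT^*\). This is precisely the definition of \(j\) being an indispensable index. By \Cref{indispensableRemark}, indispensable indices correspond to ghost vectors of \((V,\calT)\), though we only need the definition itself. Hence \(c > 1\) if and only if there are no indispensable indices.

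Two edge cases need checking, and they are the only real obstacle. First, the case \(c = 0\), i.e. \(J = \emptyset\) transversal, occurs only if \(\calT^* = \emptyset\). This is excluded because \(\calT\) covers \(\cone(V)\), which is full dimensional since \(V\) spans \(\R^d\), so \(\calT\) contains simplices of size \(d\). Second, if \(S = \emptyset\), the minimum is over an empty set and can be read as \(+\infty > 1\); this is consistent, since then no index is indispensable. These checks are where I expect the care to lie. As an optional cross-check, \Cref{splitting_apertone} shows directly that each ghost vector contributes a factor \(\C^*\), whose complement \(\{z_j = 0\}\) is a hyperplane.
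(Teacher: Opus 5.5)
Your proof is correct. It differs from the paper's only in which side of Gale duality you use to index the complement. The paper writes \(S=\bigcap_{\tau^*\in\calT^*}\bigcup_{j\in\tau^*}V(z_j)\) and shows this equals \(\bigcup_{\tau\notin\calT}\bigcap_{j\in\tau}V(z_j)\), so its coordinate subspaces are indexed by the \emph{non-simplices} of the triangulation. Its codimension criterion then runs as follows: a non-indispensable \(j\) lies in some maximal simplex, so \(\{j\}\in\calT\) and every non-simplex has at least two elements; an indispensable \(i\) gives the hyperplane \(V(z_i)\subseteq S\). You instead index by transversals of \(\calT^*\), which follows tautologically from the definition of \(U(\calT)\), and you read \(c=1\) directly off singleton transversals.

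Your route is more self-contained. The paper's step ``\(J\) meets every \(\tau^*\) iff \(J\notin\calT\)'' silently uses that every simplex of \(\calT\) is a face of a \(d\)-simplex (purity of the triangulation), and you never need this. Your check that \(\calT^*\neq\emptyset\) plays the role of the paper's remark that \(\emptyset\in\calT\). Your \(S=\emptyset\) case is vacuous: each \(\tau^*\) has \(n-d\geq 3\) elements, so \(0\in S\), as the paper notes.

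What the paper's version buys is the description of the exceptional set through the non-faces of \(\calT\), the familiar toric picture. This makes the dictionary between Meersseman's codimension condition and ghost vectors transparent.
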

\begin{proof}
    From the definition of \(U(\calT)\), its complementary \(S\) can be written as
    \begin{equation*}
        S = \bigcap_{\tau^* \in \calT^*} \bigcup_{j \in \tau^*} V(z_j),
    \end{equation*}
    with \(V(z_j)\) Zariski closed subsets in \(\C^n\); notice that \(S \neq \emptyset\) as \(0 \in V(z_j)\) for any \(j \in \{1,\dots,n\}\). The proof consists in showing that \(S\) coincides with
    \begin{equation*}
        A = \bigcup_{\tau \notin \calT} \bigcap_{j \in \tau} V(z_j).
    \end{equation*}
    Let \(\underline{z} \in A\), thus there exists \(\sigma \notin \calT\) such that \(z_j = 0\) for all \(j \in \sigma\). We observe that, for any \(\tau^* \in \calT^*\), by construction \(\sigma \not\supseteq \tau\); thus it must intersect its complementary \(\tau^*\) in some index \(j \in \sigma \cap \tau^*\). Hence, for any \(\tau^* \in \calT^*\), one has \(\prod_{j \in \tau^*} z_j = 0\), thus \(\underline{z} \in S\).

    Conversely, let \(\underline{z} \in S\); therefore, for any \(\tau^* \in \calT^*\) there exists \(j \in \tau^*\) such that \(z_j = 0\). Consider the set
    \begin{equation*}
        J = \bigcup_{\tau^* \in \calT^*} \{j \in \tau^* : z_j = 0\} \nfd \bigcup_{\tau^* \in \calT^*} J_{\tau^*},
    \end{equation*}
    and the intersections \(J_{\tau^*} \cap \tau^*\): these are assured to be nonempty by the existence of \(z \in S\). Notice that \(J \cap \tau^* \neq \emptyset\) if and only if \(J \notin \calT\); since the former condition is always satisfied and by construction \(z_j = 0\) for all \(j \in J\), we conclude that \(\underline{z} \in A\). In particular, \(S = A\) implies that \(U(\calT)\) can be written in the form
    \begin{equation*}
        U(\calT) = \C^n \setminus \bigcup_\ell S_\ell,
    \end{equation*}
    for some coordinate subspaces \(S_\ell\). Notice that, since \(\emptyset \in \calT\), none of these can be the whole \(\C^n\).
    
    Finally, we observe that if there are no indispensable indices in \(\calT^*\), then \(\{j\} \in \calT\) for any \(j \in \{1,\dots,n\}\). Hence for any \(\tau \notin \calT\)
    \begin{equation*}
        \codim_{\C^n} \bigcap_{j \in \tau} V(z_j) = \sum_{j \in \tau} \codim_{\C^n} V(z_j) = \#\tau > 1,
    \end{equation*}
    and so \(c = \min_\ell \codim_{\C^n} S_\ell > 1\). On the other hand, for any indispensable index \(i \in \calT^*\) we have \(V(z_i) \subseteq S\), since this is a coordinate hyperplane, we conclude that \(c = 1\).
\end{proof}

\section{The algebraic dimension of an LVMB manifold}
A well-known fact about LVMB manifolds is that they admit Kähler metrics only if they are complex tori \cite[Thm.~2]{Meersseman2000}, \cite[Prop.~2.2]{Bosio2001}. Similarly, an LVMB manifold that is bimeromorphic to a projective manifold is necessarily an abelian variety \cite[Thm.~3]{Meersseman2000}, \cite[Prop.~2.2]{Bosio2001}. In literature, compact complex manifolds bimeromorphically equivalent to a projective manifold are called \emph{Moishezon manifolds}; in particular, Moishezon manifolds are analytifications of smooth algebraic varieties. 

Bimeromorphic maps are far more flexible than biholomorphisms: indeed, while \(\CP^1 \times \CP^1\) and \(\CP^2\) are not biholomorphic, they are bimeromorphic via the map
\begin{align*}
    \CP^1 \times \CP^1 &\longdashrightarrow \CP^2\\
    ((a_1:a_2),(b_1:b_2)) &\longmapsto (a_1 b_1 : a_1 b_2 : a_2 b_2)
\end{align*}
that restricts to a biholomorphism onto the image away from the point \(((0:1), (1:0))\). Some of the invariants of a compact complex manifold are preserved by bimeromorphisms: one of these is the \emph{algebraic dimension}.
\begin{definition}
    Let \(\mer{X}\) be the field of meromorphic functions of a compact complex manifold \(X\). The algebraic dimension of \(X\) is the transcendence degree of the field extension \(\mer{X}/\C\), and it is denoted as \(\alg{X}\).
\end{definition}
From the definition, it is clear that \(\alg{X}\) is a bimeromorphic invariant. A priori this might be infinite, however, if \(X\) is compact, then \(\alg{X} \leq \dim_{\C}X\) \cite{Siegel1955} (see \cite[Prop.~2.1.9]{Huybrechts2005} for an English translation), with the equality realized by any smooth algebraic variety. The converse holds as well \cite{Moishezon1966} (see \cite[pp. 51--177]{7Papers1967} for an English translation).

In the construction of LVMB manifolds, the identification between \(\aff^{2m}_\R\) and \(\aff^m_\C\) induced by the choice of a period matrix yields an isomorphism between their spaces of affine rational relations. In particular, we have the following:

\begin{proposition}
    \label{a_s_are_equal}
    Let \((\Lambda,\calT^*)\) be any LVMB datum. Then \(a(\Lambda^\R) = a(\Lambda)\).
\end{proposition}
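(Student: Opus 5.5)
The plan is to show that the two relation spaces coincide as subspaces of \(\R^n\); the equality of the rational cores, and hence of the dimensions \(a(\cdot)\), is then immediate. First I have to fix what \(a(\Lambda)\) means for a configuration of points in \(\aff^m_\C\). The natural reading is
\begin{equation*}
    \Rel_\text{aff}(\Lambda) = \Bigl\{c \in \R^n : \sum_{j=1}^n c_j \Lambda_j = 0,\ \sum_{j=1}^n c_j = 0\Bigr\},
\end{equation*}
with real coefficients, and \(a(\Lambda) = \dim_\R \ratcore{\Rel_\text{aff}(\Lambda)}\). If one prefers complex coefficients, the complex relation space is the complexification of a real one, and I would reduce to the real case at the end. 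Keeping real coefficients, it suffices to prove \(\Rel_\text{aff}(\Lambda) = \Rel_\text{aff}(\Lambda^\R)\) inside \(\R^n\), because \(\ratcore{\cdot}\) depends only on the subspace of \(\R^n\).

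The key step uses \(\Lambda = \Lambda^\R\,\trans{\Pi}\). For \(c \in \R^n\) we get \(\sum_j c_j \Lambda_j = \bigl(\sum_j c_j \Lambda^\R_j\bigr)\trans{\Pi}\). The real-linear map \(\R^{2m} \to \C^m\), \(x \mapsto x\,\trans{\Pi}\), is injective: the columns of \(\Pi\) are \(\R\)-linearly independent in \(\C^m \iso \R^{2m}\), which is exactly the rank condition on a period matrix. Because \(c\) is real, \(\sum_j c_j\Lambda^\R_j \in \R^{2m}\). Hence \(\sum_j c_j \Lambda_j = 0\) if and only if \(\sum_j c_j\Lambda^\R_j = 0\). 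The condition \(\sum_j c_j = 0\) is the same on both sides, so the two relation spaces agree.

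The only delicate point is the interpretation of \(a(\Lambda)\) if complex coefficients are allowed. In that case \(\Rel^\C_\text{aff}(\Lambda) \subseteq \C^n\) is the kernel of a matrix. I would split \(c = c' + i c''\) with \(c', c'' \in \R^n\). Since \(\Lambda_j\) is not real, the real and imaginary parts do not separate directly. So I would define the rational core over \(\Q\) of the complex space, \(\bigl(\Rel^\C \cap \Q^n\bigr)\otimes \C\). Any rational vector in \(\Rel^\C\) is real, so by the previous paragraph it lies in \(\Rel_\text{aff}(\Lambda^\R)\cap\Q^n\); the converse inclusion holds by the same argument. The \(\Q\)-dimensions of the rational points therefore agree. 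Since the rational core has the same dimension as the \(\Q\)-span of its rational points, \(a(\Lambda) = a(\Lambda^\R)\) holds under either convention.

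Finally, I should also note that \(a(\Lambda)\) is well defined on \([\Lambda]_\C\). Affine changes \(\Lambda \mapsto \Lambda A + b\) preserve affine relations, since \(\sum_j c_j = 0\) kills the translation \(b\) and \(A\) is invertible. Likewise, changing \(\Lambda^\R\) by a real affine automorphism does not alter \(\Rel_\text{aff}\). So the equality does not depend on the choices made in the construction.
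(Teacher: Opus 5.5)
Your proof is correct and follows essentially the same route as the paper. Both arguments use \(\Lambda = \Lambda^\R\trans{\Pi}\) together with the nondegeneracy of the period matrix to show that \(\Lambda\) and \(\Lambda^\R\) have the same affine rational relations; the only difference is that you use injectivity of \(x \mapsto x\,\trans{\Pi}\) on \(\R^{2m}\) directly, whereas the paper conjugates the relation and uses invertibility of \([\trans{\Pi} \,|\, \trans{\overline{\Pi}}]\), which is an equivalent form of the same period-matrix condition (and your version shows in passing that the full real relation spaces coincide).
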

\begin{proof}
    By construction \(\Lambda = \Lambda^\R \trans{\Pi}\) for some period matrix \(\Pi\). Thus, any affine rational relation in \(\Rel(\Lambda^\R) \cap \Q^n\) induces an affine rational relation among the \(\Lambda_j\)'s; this correspondence is clearly injective. Conversely, any \(c \in \Rel_\text{aff}(\Lambda) \cap \Q^{n}\) satisfies \(c \,\Lambda^\R \trans{\Pi} = 0\); taking complex conjugates, we obtain
    \begin{equation*}
        0 = \overline{c \,\Lambda^\R \trans{\Pi}} = c \,\Lambda^\R \trans{\overline{\Pi}},
    \end{equation*}
    hence,
    \begin{equation*}
        c \,\Lambda^\R\, [\trans{\Pi} | \trans{\overline{\Pi}}] = 0.
    \end{equation*}
    Being \(\Pi\) a period matrix, \([\trans{\Pi} | \trans{\overline{\Pi}}] \in \GL(2m,\C)\), whence \(c \, \Lambda^\R = 0\).
\end{proof}
The integer \(a(\Lambda)\) appears in \cite[\S IV]{Meersseman2000} simply as \(a\). This is the lower bound for the algebraic dimension of an LVMB manifold found by Meersseman.
\begin{proposition}[{\cite[Thm.~4]{Meersseman2000}}]
    \label{Meersseman_lower_bound}
    Let \(N=N(\Lambda,\calT^*)\) be an LVMB manifold with \(k\) indispensable indices, then \(\alg{N} \geq a(\Lambda)\). Moreover, if \(k=0\), then \(\alg{N} = a(\Lambda)\).
\end{proposition}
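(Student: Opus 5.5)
We plan to prove Meersseman's lower bound by explicitly constructing \(a(\Lambda)\) algebraically independent meromorphic functions on \(N\), and to prove equality when \(k=0\) by a Hartogs-type extension argument.

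\emph{Lower bound.} Let \(c^{(1)},\dots,c^{(a)}\), with \(a = a(\Lambda)\), be a basis of \(\ratcore{\Rel_\text{aff}(\Lambda)}\). By \Cref{a_s_are_equal} we may work with \(\Lambda\) directly, and we can scale the basis so that each \(c^{(s)}\in\Z^n\). For each \(s\), split \(c^{(s)} = c^{(s)}_+ - c^{(s)}_-\) into positive and negative parts. Consider the rational function on \(\C^n\) given by
\begin{equation*}
    f_s(z) = \prod_{j=1}^n z_j^{c^{(s)}_j}.
\end{equation*}
Since \(\sum_j c^{(s)}_j = 0\), \(f_s\) is homogeneous of degree \(0\), so it descends to a rational function on \(\CP^{n-1}\). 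Under \(\alpha\) the monomial is multiplied by \(\exp\bigl(2\pi i\langle \sum_j c^{(s)}_j\Lambda_j, u\rangle\bigr) = 1\), so \(f_s\) is \(\alpha\)-invariant. Hence \(f_s\) restricts to a meromorphic function on \(\proj(U(\calT))\) and descends to a meromorphic function on \(N\). The quotient map \(\proj(U(\calT))\to N\) is a holomorphic principal bundle with fibre \(\C^m\) acting freely and properly, so locally the quotient admits holomorphic sections. This gives meromorphy on \(N\): locally \(f_s\) is the pullback of a quotient of holomorphic functions along a section.

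\emph{Independence.} We would argue that \(f_1,\dots,f_a\) are algebraically independent already as functions on the open set \((\C^*)^n\subset U(\calT)\). The monomials \(z^{c^{(s)}}\) with \(\Z\)-linearly independent exponent vectors are algebraically independent, because the exponent lattice has rank \(a\). Pullback to \(\proj(U(\calT))\) is injective on meromorphic functions, so independence descends to \(N\). This gives \(\alg{N}\ge a(\Lambda)\).

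\emph{Equality for \(k=0\).} Given \(F\in\mer{N}\), pull it back to an \(\alpha\)-invariant meromorphic function on \(\proj(U(\calT))\), and then to a degree-\(0\) homogeneous meromorphic function on \(U(\calT)\). By \Cref{minimal_codimension_no_ghosts}, when \(k=0\) the complement of \(U(\calT)\) has codimension at least \(2\). Levi's extension theorem for meromorphic functions then extends \(F\) to \(\C^n\) (and to \(\CP^{n-1}\)). This step is the main obstacle: one must then show that an invariant meromorphic function on \(\CP^{n-1}\) is rational (true by Chow/GAGA, since \(\CP^{n-1}\) is projective). One must also show that an invariant rational function lies in the field generated by monomials with exponents in \(\Rel_\text{aff}(\Lambda)\cap\Z^n\).

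For the last point, write \(F = P/Q\) with \(P\) and \(Q\) coprime. Invariance under the connected group \(\alpha(\C^m)\), which acts on \(\C^n\) diagonally by characters, forces \(P\) and \(Q\) to be semi-invariant with the same character. Because the \(z_j\)-weights are the characters \(e^{2\pi i\langle\Lambda_j,\cdot\rangle}\), each monomial appearing in \(P\) or \(Q\) has weight \(\langle\sum_j e_j\Lambda_j,\cdot\rangle\). Equal weights for two monomials means their exponent difference lies in \(\Rel(\Lambda)\cap\Z^n\), and homogeneity adds \(\sum_j c_j = 0\). Thus \(F\) is a rational function in monomials with exponents in \(\Rel_\text{aff}(\Lambda)\cap\Z^n\), and this lattice has rank \(a(\Lambda)\). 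Therefore \(\mer{N}\) has transcendence degree at most \(a(\Lambda)\).
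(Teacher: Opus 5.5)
Your proof is correct, and it is essentially the argument the paper defers to: the paper gives no proof of its own and cites Meersseman's Thm.~4, noting that it carries over to LVMB data. In that argument, integral affine relations give $\alpha$-invariant degree-zero monomials. For $k=0$, the codimension-$\geq 2$ complement of $U(\calT)$ from \Cref{minimal_codimension_no_ghosts} lets Levi's theorem extend every invariant meromorphic function to $\CP^{n-1}$. There rationality, together with the semi-invariance of a coprime numerator and denominator, forces all exponent differences into $\FI(\Lambda)$. Your route is also the $k=0$ degenerate case of Steps~1, 6 and~8 in the proof of \Cref{algebraic_dimension_formula}: there the extension only reaches $(\C^*)^{k-1}\times\C^{n-k}$, so the coefficients need not be polynomial and the torus of Step~7 appears.
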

\begin{note}
    More precisely, this theorem is proved only for LVM manifolds---a large subclass of LVMB manifolds (see \cite[\S 4]{BattagliaZaffran2015}). Nevertheless, the proof in \cite{Meersseman2000} extends in every part to the more general LVMB setting, hence we treat this as already proven in the form of the statement above.
\end{note}
\begin{remark}
    Any LVMB manifold arising from a configuration \((\Lambda,\calT^*)\) in which all the indices are indispensable is a complex torus \cite{Meersseman2000} (see also the examples in \cite{Thiella2026preprint}). Since in this case \(d = 0\), we have \(a(\Lambda) = 0\). On the other hand, any complex torus---abelian varieties included---can be constructed as an LVMB manifold \cite{Meersseman2000}; thus there is at least one case of an LVMB manifold \(N\) where \(0 = a(\Lambda) < \alg{N}\).
    The fact that the algebraic dimension of a complex torus depends on the \emph{quadratic} relations among the coefficients of its period matrix \cite[\S 2.6]{BirkenhakeLange1999} shows that the algebraic dimension of an LVMB manifold cannot depend solely on the linear information encoded in \((\Lambda,\calT^*)\). Indeed, it is classically known that the algebraic dimension of a complex torus of dimension \(k\) may attain any value between \(0\) and \(k\).
\end{remark}

To construct a general upper bound for the algebraic dimension of an LVMB manifold, we introduce the following terminology borrowed from the theory of dynamical systems:
\begin{definition}
    Let \((\Lambda,\calT^*)\) be an LVMB datum. We define the \emph{group of first integrals} of the configuration as the \(\Z\)-module
    \begin{equation*}
        I(\Lambda) = \ratcore{\Rel_\text{aff}(\Lambda)} \cap \Z^n;
    \end{equation*}
    and we refer to its elements simply as \emph{first integrals}.
\end{definition}

We can immediately observe that \(\dim_{\R} \ratcore{\Rel_\text{aff}(\Lambda)} = \rk \FI(\Lambda) = a(\Lambda)\); moreover, by construction, \(\FI(\Lambda)\) is independent of the representative of the class \([\Lambda]_\C\).

Before proving the main result, we need two technical lemmas; the first requires the notion of \emph{asymptotic cone} from convex analysis. For the basic properties of asymptotic cones we refer to the book \cite{Hiriart-UrrutyLemarechal2001}.
\begin{definition}
    Let \(C\) be a non-empty closed convex subset in \(\R^n\); fix \(x \in C\). The \emph{asymptotic cone} of \(C\) is defined as the convex set
    \begin{equation*}
        C_\infty = \{ v \in \R^n : x + tv \in C \text{ for all } t >0\}.
    \end{equation*}
    As \(C\) is convex, this definition does not depend on the choice of \(x \in C\).
\end{definition}
The set \(C_\infty\) encodes all the properties of \(C\) that hold “at infinity''. In particular, the following facts hold:
\begin{itemize}
    \item a closed convex set is compact if and only if \(C_\infty = \{0\}\),
    \item for any affine subspace \(v + W \subset \R^n\), one has \((v+W)_\infty = W\),
    \item for any closed convex cone \(K \subset \R^n\), \(K_\infty = K\).
\end{itemize}
Another remarkable property of asymptotic cones is their compatibility with intersections, namely:
\begin{proposition}[{\cite[Prop 2.2.5]{Hiriart-UrrutyLemarechal2001}}]
    \label{intersection_compatibility}
    Let \(C,D\) be two closed convex sets with \(C \cap D \neq \emptyset\). Then \((C\cap D)_\infty = C_\infty\cap D_\infty\).
\end{proposition}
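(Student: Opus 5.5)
The plan is to reduce everything to a single common base point, which the hypothesis \(C \cap D \neq \emptyset\) provides. First I note that \(C \cap D\) is again a non-empty closed convex subset of \(\R^n\), so its asymptotic cone \((C\cap D)_\infty\) is defined. Then I fix a point \(x \in C \cap D\) and compute all three asymptotic cones \(C_\infty\), \(D_\infty\) and \((C\cap D)_\infty\) using this same \(x\) as base point. This is legitimate because, as remarked after the definition, the asymptotic cone of a closed convex set does not depend on the chosen base point.

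With this common base point the equality is a direct unwinding of the definitions. Let \(v \in \R^n\). Then \(v \in (C\cap D)_\infty\) if and only if \(x + tv \in C \cap D\) for all \(t > 0\). This holds if and only if \(x + tv \in C\) for all \(t>0\) and \(x+tv \in D\) for all \(t>0\), that is, if and only if \(v \in C_\infty\) and \(v \in D_\infty\). Hence \((C\cap D)_\infty = C_\infty \cap D_\infty\).

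The only real content, and the step I regard as the main obstacle, is the independence of the base point. This is exactly where closedness and convexity enter, and it is also why the hypothesis \(C\cap D\neq\emptyset\) is needed: without a common point, the three cones could not all be tested at the same \(x\). If one wants to verify the independence rather than quote it, I would argue as follows.

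Suppose \(x + tv \in C\) for all \(t>0\), and let \(y \in C\) and \(t > 0\). For \(\varepsilon \in (0,1)\), convexity gives
\begin{equation*}
(1-\varepsilon)\,y + \varepsilon\left(x + \tfrac{t}{\varepsilon} v\right) \in C.
\end{equation*}
This point equals \(y + tv + \varepsilon(x - y)\). Letting \(\varepsilon \to 0\) and using that \(C\) is closed, we obtain \(y + tv \in C\). So the defining condition for \(v\) holds at \(x\) if and only if it holds at any other point of \(C\), and the computation above goes through.
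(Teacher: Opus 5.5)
Your argument is correct: after fixing a common base point \(x \in C\cap D\), the equality follows directly from the definition, and your convexity-plus-closedness check of base-point independence (the fact the paper states right after defining \(C_\infty\)) is sound. The paper gives no proof of its own and simply cites Hiriart-Urruty--Lemar\'echal, whose proof is this same common-base-point argument, so your version essentially reproduces it and makes it self-contained.
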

Although the following \namecref{compact_intersection} can be verified directly, we give a shorter and more streamlined proof by exploiting the properties of asymptotic cones.
\begin{note}
    From now on, we adopt the following convention: given an equivalence class of configurations \([\Lambda]_\C\) of \(n\) points in the affine space \(\aff_\C^m\), we can always choose a representative \(\Lambda\) such that \(\Lambda_1 = 0\). Hence, we can identify such a configuration with the list of vectors \((\Lambda_2,\dots, \Lambda_n)\) in \(\C^m\). Correspondingly, we will always  identify \(\Rel_\text{aff}(\Lambda) \iso \Rel(\Lambda_2, \dots, \Lambda_n)\) with a subspace of \(\R^{n-1}\), and similarly identify \(\FI(\Lambda)\) as a subgroup of \(\Z^{n-1}\).
\end{note}
\begin{lemma}
    \label{compact_intersection}
    Let \((\Lambda,\calT)\) be an LVMB datum with \(k \geq 1\), and let \(0,\Lambda_2,\dots,\Lambda_{k}\) be its indispensable points. We denote as \(\R^{n-k}_{\geq 0}\) the closure of the positive orthant in \(\R^{n-k}\). Then, for any \(J \in \R^{k-1} \times \R^{n-k}_{\geq 0}\), the subset \((J + \Rel(\Lambda)) \cap (\R^{k-1} \times \R^{n-k}_{\geq 0})\) is compact.
\end{lemma}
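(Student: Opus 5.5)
The plan is to reduce compactness to a statement about asymptotic cones, and then to check that statement by Gale duality, translating it back to the triangulated vector configuration \((V,\calT)\).

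\textbf{Reduction to the asymptotic cone.} Set \(C = J + \Rel(\Lambda)\) and \(D = \R^{k-1}\times\R^{n-k}_{\geq 0}\). Both are closed convex subsets of \(\R^{n-1}\), and they meet, since \(J \in C\cap D\). By \Cref{intersection_compatibility} and the listed properties of asymptotic cones,
\begin{equation*}
(C\cap D)_\infty = C_\infty \cap D_\infty = \Rel(\Lambda)\cap\left(\R^{k-1}\times\R^{n-k}_{\geq 0}\right).
\end{equation*}
A closed convex set is compact exactly when its asymptotic cone is \(\{0\}\). So it suffices to show that any real affine relation \(c\) among \(\Lambda_1=0,\Lambda_2,\dots,\Lambda_n\) with \(c_j\geq 0\) for every non-indispensable index \(j>k\) vanishes. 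Here the coordinate \(c_1\) is recovered as \(-\sum_{j\geq 2}c_j\).

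\textbf{Passing to real relations.} First, the real affine relations of \(\Lambda\) coincide with those of \(\Lambda^\R\). The proof is the complex-conjugation argument of \Cref{a_s_are_equal}, which works verbatim for real (not only rational) coefficients, because \([\trans{\Pi}|\trans{\overline{\Pi}}]\) is invertible. Second, \(\Rel_\text{aff}(\Lambda^\R)=\Rel(\hat\Lambda^\R)\), and by \eqref{Gale_duality_definition} this equals \(\Rel(V)^\perp\). Now \(\Rel(V)\) is the kernel of \(e_j\mapsto v_j\), so its orthogonal complement is the row space of the matrix with columns \(v_j\). Hence there is \(\xi\in(\R^d)^*\) with \(c_j = \langle \xi, v_j\rangle\) for all \(j=1,\dots,n\). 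By \Cref{indispensableRemark}, the indispensable indices \(1,\dots,k\) are exactly the ghost vectors, so the sign condition becomes \(\langle\xi,v_j\rangle\geq 0\) for all non-ghost \(v_j\).

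\textbf{Key step: forcing \(\xi=0\).} I expect the main point to be showing that the non-ghost vectors positively span \(\R^d\). Since \(V\) is balanced and spans \(\R^d\), we have \(\cone(V)=\R^d\): given \(x=\sum a_jv_j\), add \(t\sum_j v_j = 0\) with \(t\) large to make all coefficients nonnegative. By \Cref{Triangulated_vector_conf}, \(\R^d=\cone(V)\subseteq\bigcup_{\tau\in\calT}\cone(\tau)\). Each \(\cone(\tau)\) is generated by non-ghost vectors, since ghosts lie in no simplex. Hence \(\xi\geq 0\) on all of \(\R^d\), which forces \(\xi=0\), so \(c=0\) and the intersection is compact.

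One bookkeeping detail needs care: the identification of \(\Rel_\text{aff}(\Lambda)\) with a subspace of \(\R^{n-1}\) (dropping the coordinate of \(\Lambda_1=0\)) must match the ordering in which the indispensable indices come first. Both conventions put index \(1\) among the indispensables, so the coordinates \(c_2,\dots,c_k\) are the free ones and \(c_{k+1},\dots,c_n\) are the constrained ones, which is consistent with the statement.
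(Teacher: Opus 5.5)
Your proposal is correct and follows the paper's proof. You use compatibility of asymptotic cones with intersections to reduce to \(\Rel(\Lambda)\cap(\R^{k-1}\times\R^{n-k}_{\geq 0})=\{0\}\), then use Gale duality to write every relation as \(c_j=\langle\xi,v_j\rangle\), and conclude because the non-ghost vectors cannot all lie in a closed half-space. Where the paper simply invokes that the non-ghost vectors span the rays of a complete fan, you justify more: you pass from complex to real relations by the conjugation trick, and you derive \(\cone(V)=\R^d\) from balancedness together with the covering axiom of the triangulation.
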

\begin{proof}
    Both \(J+\Rel(\Lambda)\) and \(\R^{k-1} \times \R^{n-k}_{\geq 0}\) are closed convex subsets of \(\R^{n-1}\); moreover, by assumption, they intersect at \(J\). Thus, taking the asymptotic cones, we have
    \begin{equation}
        \label{equation1}
        (J+\Rel(\Lambda))_\infty \cap (\R^{k-1} \times \R^{n-k}_{\geq 0})_\infty = \Rel(\Lambda) \cap (\R^{k-1} \times \R^{n-k}_{\geq 0})
    \end{equation}
    as \(J+\Rel(\Lambda)\) is an affine subspace and \(\R^{k-1} \times \R^{n-k}_{\geq 0}\) a closed convex cone. 
    
    By the geometry of the LVMB datum \(\Lambda\), the right-hand side of \cref{equation1} is \(\{0\}\).  Indeed, by Gale duality, \(\Rel(\Lambda)\) is spanned by the rows of the matrix
    \begin{equation*}
        \begin{bmatrix}
            \vertbar && \vertbar\\
            v_1 & \cdots & v_{n-1}\\
            \vertbar && \vertbar
        \end{bmatrix},
    \end{equation*}
    with \(\left(-\sum_{j=1}^{n-1} v_j, v_1,\dots, v_{n-1}\right)\) forming an odd and balanced vector configuration that is Gale dual to \(\Lambda^\R\). Thus, any \(w \in \Rel(\Lambda)\) is of the form \(w = \sum_{\ell=1}^{n-1} \langle v_\ell,v\rangle e_\ell\) for some \(v \in \R^d\). If \(\langle v_j, v\rangle\) were all non-negative (or all non-positive) for \(j=k,\dots,n-1\), the vectors \(v_k,\dots, v_{n-1}\) would lie in a closed half plane of \(\R^d\). This never happens, since the non-ghost vectors of a triangulated vector configuration span the rays of a complete fan.

    The statement then follows observing that, by \Cref{intersection_compatibility},
    \begin{equation*}
         ((J+\Rel(\Lambda)) \cap ( \R^{k-1} \times \R^{n-k}_{\geq 0}))_\infty = (J+\Rel(\Lambda))_\infty \cap (\R^{k-1} \times \R^{n-k}_{\geq 0})_\infty = \{0\},
    \end{equation*}
    whence \((J+\Rel(\Lambda)) \cap ( \R^{k-1} \times \R^{n-k}_{\geq 0})\) is compact.
\end{proof}
The second technical lemma we need is the following:
\begin{lemma}
    \label{dim_W}
    Let \((\Lambda,\calT)\) be an LVMB configuration with \(k \geq 1\) indispensable points; let these be \(0,\Lambda_2,\dots,\Lambda_{k}\). Consider the complex vector space \(W = \Span_{\C} (L^1,\dots,L^m) \subseteq \C^{k-1}\) spanned by the columns of the matrix
    \begin{equation*}
        \begin{bmatrix}
            \vertbar &&\vertbar\\
            L^1 &\cdots & L^m\\
            \vertbar && \vertbar
        \end{bmatrix} =
        \begin{bmatrix}
            \horzbar \Lambda_2 \horzbar\\
            \vdots\\
            \horzbar \Lambda_{k} \horzbar 
        \end{bmatrix}.
    \end{equation*}
     Then 
    \begin{equation*}
        \left\lceil \frac{k-1}{2} \right\rceil \leq \dim_{\C} W \leq \min(k-1,m),
    \end{equation*}
    and 
    \begin{equation*}
        \dim_{\R} (W \cap \R^{k-1}) = 2 \dim_{\C} W - (k-1).
    \end{equation*}
\end{lemma}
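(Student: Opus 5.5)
The plan is to rewrite \(W\) as the image of a complex linear map and use a single real-rank input coming from the LVMB geometry. Let \(A\) be the \((k-1)\times m\) complex matrix with rows \(\Lambda_2,\dots,\Lambda_k\), so that \(W\) is the column space of \(A\) and \(\dim_\C W = \rk_\C A\). The upper bound \(\dim_\C W \le \min(k-1,m)\) is then immediate, since \(A\) has \(k-1\) rows and \(m\) columns. Everything else should follow from one fact.

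The key input is that the indispensable points \(0,\Lambda_2,\dots,\Lambda_k\) are affinely independent over \(\R\) in \(\aff^{2m}_\R \iso \aff^m_\C\). Equivalently, the real vectors \(\Lambda_2^\R,\dots,\Lambda_k^\R \in \R^{2m}\) are \(\R\)-linearly independent. I would obtain this from Gale duality as follows.

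\begin{itemize}
    \item A real linear relation among \(\Lambda_2^\R,\dots,\Lambda_k^\R\) is an element of \(\Rel(\Lambda)\subseteq \R^{n-1}\) supported on the first \(k-1\) coordinates, the ghost coordinates.
    \item As in the proof of \Cref{compact_intersection}, such an element has the form \(w_\ell = \langle v_\ell, v\rangle\) for some \(v\in\R^d\).
    \item Its vanishing on the non-ghost indices means \(\langle v_j, v\rangle = 0\) for all non-ghost \(v_j\). These vectors span \(\R^d\), because they generate a complete fan, so \(v=0\) and hence \(w=0\).
\end{itemize}

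By \Cref{a_s_are_equal} and the period-matrix identification \(\Lambda = \Lambda^\R \trans{\Pi}\), the same independence is a statement about the real span of the rows of \(A\) inside \(\C^m\) viewed as \(\R^{2m}\). This gives \(k-1\le 2m\) as a byproduct.

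Next I would compute \(W\cap\R^{k-1}\). A vector \(x\in\R^{k-1}\) lies in \(W\) iff its conjugate also lies there, so \(W\cap\R^{k-1}\) is the real form of \(W\cap\overline W\), and \(\dim_\R(W\cap\R^{k-1}) = \dim_\C(W\cap\overline W)\). Then
\[
\dim_\C(W\cap\overline W) = 2\dim_\C W - \dim_\C(W+\overline W).
\]
So it suffices to show \(W+\overline W = \C^{k-1}\). Now \(W+\overline W\) is the column space of \([A\,|\,\overline A]\), and \(A\) and \([A\,|\,\overline A]\) have the same row structure. Left-multiplying by the invertible matrix \(\tfrac12\begin{bmatrix}1&1\\-i&i\end{bmatrix}\) turns \([A\,|\,\overline A]\) into \([\Re A\,|\,\Im A]\), a real \((k-1)\times 2m\) matrix. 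Its rows are exactly the \(\Lambda_j^\R\) expressed in a suitable real basis, which absorbs the period matrix. Its complex rank equals its real rank, which is \(k-1\) by the independence established above. Hence \(W+\overline W=\C^{k-1}\), which yields the formula \(\dim_\R(W\cap\R^{k-1}) = 2\dim_\C W-(k-1)\).

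Finally, the lower bound follows because this quantity is nonnegative: \(2\dim_\C W\ge k-1\) gives \(\dim_\C W\ge\lceil (k-1)/2\rceil\).

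The main obstacle I expect is the independence step. It requires care that the Gale dual description of \(\Rel(\Lambda)\) (with \(\Lambda_1=0\) normalized and the balancing vector removed) really identifies relations supported on ghost indices with functionals vanishing on all non-ghost \(v_j\). Once the real rank \(k-1\) is secured, the rest is linear algebra.
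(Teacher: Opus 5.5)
Your proof is correct, and it differs from the paper's mainly in where the key input comes from.

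\textbf{The key input.} Both arguments rest on the same fact: \(\Lambda_2,\dots,\Lambda_k\) are \(\R\)-linearly independent in \(\C^m\iso\R^{2m}\).
\begin{itemize}
    \item The paper imports this fact from the normal form of \cite[Prop.~1.5]{Thiella2026preprint}, in which these vectors appear as columns of a period matrix.
    \item You derive it directly from Gale duality. A real relation supported on the indispensable indices has the form \((\langle v_\ell,v\rangle)_\ell\) with \(v\) orthogonal to every non-ghost vector. Since the non-ghost vectors span \(\R^d\), this forces \(v=0\).
\end{itemize}
Your version makes the lemma self-contained. It also isolates its only geometric content: the indispensable points are affinely independent. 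Equivalently, every \(\tau^*\in\calT^*\), which contains all of them, is an affine basis.

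\textbf{The linear algebra.} Your argument here is dual to the paper's.
\begin{itemize}
    \item The paper takes \(S=\ker P\), shows \(S\cap\overline S=\{0\}\), and writes \(W\cap\overline W=(S\oplus\overline S)^\perp\).
    \item You show \(W+\overline W=\C^{k-1}\) and apply Grassmann's formula. Since \(W+\overline W=(S\cap\overline S)^\perp\), this is the same statement as the paper's.
\end{itemize}
A small advantage of your ordering is that the lower bound \(\lceil (k-1)/2\rceil\) comes for free, as nonnegativity of \(2\dim_\C W-(k-1)\). The paper instead proves it separately from \(\rk_\R P\le 2\rk_\C P\).

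\textbf{Two harmless slips.}
\begin{itemize}
    \item The passage from \([A\,|\,\overline A]\) to \([\Re A\,|\,\Im A]\) is a \emph{right} multiplication by an invertible \(2m\times 2m\) block matrix. That is what preserves the column space, which is what you need.
    \item \Cref{a_s_are_equal} is about rational relations, so it does not directly cover real ones. Use instead that \(x\mapsto x\,\trans{\Pi}\) is an \(\R\)-linear isomorphism \(\R^{2m}\to\C^m\), because \(\Pi\) is a period matrix.
\end{itemize}
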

\begin{proof}
    By \cite[Prop.~1.5]{Thiella2026preprint}, the datum \(\Lambda\) of any LVMB pair \((\Lambda,\calT^*)\) can be written as
    \begin{equation}
        \label{good_period_relation}
        \Lambda = \Lambda^{\R}_\text{good} \Pi^t,
    \end{equation}
    with \(\Lambda^{\R}_\text{good}\) a Gale dual configuration of some odd and balanced vector configuration of the form
    \begin{equation*}
        \Lambda^{\R}_\text{good} =
        \begin{bmatrix}
            \horzbar (\Lambda^{\R}_\text{good})_1 \horzbar\\
            \vdots\\
            \horzbar (\Lambda^{\R}_\text{good})_k  \horzbar
        \end{bmatrix}
        =
        \begin{bNiceArray}{c|c}[margin]
            \Block{1-2}{0} &\\
            \hline
            I_{k-1} & 0 \\
            \hline
            \Block{1-2}{[\ell_{ij}]}         
        \end{bNiceArray},
    \end{equation*}
    with \(\ell_{ij} \in \R\), and \(\Pi \in \C^{m \times 2m}\) a suitable period matrix. Because of \cref{good_period_relation}, such a \(\Pi\) is of the form
    \begin{equation*}
        \Pi =
        \begin{bNiceArray}{c|ccc|c}
            \vertbar & \vertbar & &\vertbar &\Block{3-1}{*} \\
            \Pi^1 & \Lambda_2 & \Cdots & \Lambda_k\\
            \vertbar & \vertbar & &\vertbar
        \end{bNiceArray}
        \nfd
        \begin{bNiceArray}{c|c|c}
            \vertbar & \Block{3-1}{P} & \Block{3-1}{*}\\
            \Pi^1 \\
            \vertbar
        \end{bNiceArray}
    \end{equation*} 
    for some \(P \in \C^{m \times (k-1)}\). Since the columns of \(\Pi\) are \(\R\)-linearly independent, the column rank satisfies \(\rk_\R^\text{col} P = k-1\); this also fulfils \(\rk_\R^\text{col} P \leq 2 \rk_\C^\text{col} P\) as \(\Span_{\R}(P^1,\dots,P^{k-1}) \subseteq \Span_{\C}(P^1,\dots,P^{k-1})_\R\). Therefore,
    \begin{equation*}
        \rk_{\C}^\text{col} P \geq \left\lceil \frac{k-1}{2} \right\rceil.
    \end{equation*}
    On the other hand, we immediately have \(\rk_{\C}^\text{col} P \leq \min(k-1,m)\). Since \(W\) is spanned by the \emph{rows} of \(P\), the first part of the statement then follows by observing that \(\dim_\C W = \rk_{\C}^\text{row} P = \rk_{\C}^\text{col} P\).

    For the second part, we consider the vector subspace
    \begin{equation*}
        S = \{s \in \C^{k-1} : P s = 0\}.
    \end{equation*}
    Since \(P\) has complex rank \(r \coloneqq \dim_{\C}W\), we have \(\dim_{\C} S = k-1-r\); moreover, as \(W\) is spanned by the \emph{rows} of \(P\), one has \(W = \overline{S}^\perp\) with respect to the standard Hermitian product of \(\C^{k-1}\). On the other hand, the columns of \(P\) are \(\R\)-linearly independent, so \(S \cap \R^{k-1} = \{0\}\). Therefore, one has \(S \cap \overline{S} = \{0\}\), and so \(\dim_{\C} (\overline{S} \oplus S) = 2(k-1-r)\). Finally, we observe that \((W \cap \R^{k-1}) \otimes \C = W \cap \overline{W}\); as this satisfies
    \begin{equation*}
         W \cap \overline{W} =  \overline{S}^\perp \cap  S^\perp  = (\overline{S} \oplus S)^\perp,
    \end{equation*}
    then,
    \begin{align*}
        \dim_{\R} (W \cap \R^{k-1}) &= \dim_{\C} (W \cap \R^{k-1}) \otimes \C = \dim_{\C} (W \cap \overline{W}) = \dim_{\C} (\overline{S} \oplus S)^\perp\\
        &= k-1 - 2(k-1-r) = 2 \dim_{\C} W - k +1. \qedhere
    \end{align*}

\end{proof}
\begin{note}
    The last equality is well-posed since \(\dim_{\C} W \geq \frac{k-1}{2}\) by the first part of the proof.
\end{note}
We can now state the main result of the paper.
\begin{theorem}
    \label{algebraic_dimension_formula}
    Let \((\Lambda,\calT^*)\) be an LVMB configuration with \(k \geq 1\) indispensable indices. Let \(N = N(\Lambda,\calT^*)\) be the corresponding LVMB manifold, then \(\alg{N} \leq a(\Lambda) + \left\lfloor \frac{k-1}{2} \right\rfloor\). In particular, for any LVMB manifold with \(k \leq 2\), we have \(\alg{N} = a(\Lambda)\). 
\end{theorem}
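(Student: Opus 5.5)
Since \(\alg{N}\) is a bimeromorphic invariant, I work with meromorphic functions on \(N\) pulled back to \(\C^m\)-invariant meromorphic functions \(f\) on \(\proj(U(\calT))\). By \Cref{splitting_apertone}, \(U(\calT) = (\C^*)^{k} \times U(\calT_0)\), and the complement of \(U(\calT_0)\) has codimension \(\geq 2\) by \Cref{minimal_codimension_no_ghosts}. Passing to the affine chart \(z_1 = 1\), which is allowed because \(z_1\) is indispensable and \(\Lambda_1 = 0\), I regard \(f\) as a meromorphic function on \((\C^*)^{k-1}\times U(\calT_0)\). By Levi/Hartogs extension it extends over the codimension-\(\geq 2\) part, so locally \(f\) is a quotient of functions holomorphic on \((\C^*)^{k-1}\times \C^{n-k}\).

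Next I expand numerator and denominator in Laurent series, Laurent in the first \(k-1\) variables and Taylor in the last \(n-k\). The \(\C^m\)-action in \cref{equation:LVMBaction} multiplies the monomial \(z^J\) by \(e^{2\pi i\langle \sum_j J_j\Lambda_j, u\rangle}\). Following Meersseman's argument for \cref{Meersseman_lower_bound}, I reduce \(f\) to a ratio of \emph{quasi-invariant} series, meaning series whose monomials share a common character. The exponents of such a series lie in a set \((J+\Rel(\Lambda))\cap \Z^{n-1}\) intersected with \(\R^{k-1}\times\R^{n-k}_{\geq 0}\). I fix a minimal weight \(J\) by choosing \(J\) so that the character difference is accounted for. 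By \Cref{compact_intersection} this set is compact, so the series is finite, and \(f\) is a rational function in the \(z_j\) that is a ratio of polynomials with the same character.

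The function field is then generated, over the invariants, by two kinds of functions. The first are the Laurent monomials \(z^J\) with \(J\) an exact invariant; these are exactly the first integrals \(J\in\FI(\Lambda)\) together with whatever additional invariance survives on the indispensable block. The second are the quotients \(z^J/z^{J'}\) of equal character. The character of \(z^J\) is determined by \(\sum J_j\Lambda_j \in \C^m\) modulo the lattice of integer-periodic shifts. Those relations with \(\sum J_j \Lambda_j\) vanishing give \(a(\Lambda)\) algebraically independent monomials. The remaining freedom comes from monomials in the indispensable coordinates \(z_2,\dots,z_k\). There the relevant lattice is \(\Z^{k-1}\) projected by \(P\), and the fields generated look like functions on the torus \(\C^{k-1}\)-type quotient \(W/(\text{image of }\Z^{k-1})\).

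The transcendence degree contributed beyond \(a(\Lambda)\) is bounded by the dimension of a torus-like quotient attached to \(W\). Using \Cref{dim_W}, the real part \(W\cap\R^{k-1}\) of dimension \(2\dim_\C W-(k-1)\) corresponds to directions that are either genuinely rational (already counted in \(a\)) or dense. The effective contribution is at most \((k-1)-\dim_\C W \leq \lfloor (k-1)/2\rfloor\), using \(\dim_\C W\geq\lceil (k-1)/2\rceil\). Combining this with Meersseman's lower bound, for \(k\leq 2\) the extra term vanishes and \(\alg{N}=a(\Lambda)\).

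The main obstacle is this last step. I must carefully separate genuine first integrals from quasi-invariants on the indispensable block, and identify the transcendence degree of the latter with a complex torus dimension bounded by \((k-1)-\dim_\C W\). I would first try to realize this as a holomorphic map from \(N\) to the torus \((\C^*)^{k-1}/\exp(W)\) modulo rational directions, and invoke \(\alg{}\leq\dim\) for that torus.
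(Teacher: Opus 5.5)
\textbf{The gap is in your second paragraph.} There you reduce \(f\) to a ratio of quasi-invariant series and conclude that \(f\) is rational in the \(z_j\). If that step were valid it would prove too much.

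A holomorphic character \(u\mapsto e^{2\pi i\langle\lambda,u\rangle}\) of \(\C^m\) determines \(\lambda\) exactly, not modulo any lattice. So a quotient \(z^J/z^{J'}\) of two monomials with equal character is the first-integral monomial \(\LaurMon(J-J')\), with \(J-J'\in\FI(\Lambda)\). Your ``two kinds'' of generators are therefore one kind, and you would get \(\mer{N}\subseteq\C(\LaurMon(\FI(\Lambda)))\), i.e.\ \(\alg{N}\le a(\Lambda)\) for every \(k\).

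This is false. An abelian variety of dimension \(m\), realized as an LVMB manifold, has \(k=2m+1\), \(a(\Lambda)=0\) and \(\alg{N}=m\). The torus-type functions you try to add in your third paragraph do not lie in the field your second paragraph produces, so there is nothing there to bound.

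Meersseman's quasi-invariance argument does not transfer because, after Levi extension, your domain is \((\C^*)^{k-1}\times\C^{n-k}\), which is simply connected only for \(k\le 1\). On it, the numerator and denominator of an invariant meromorphic function cannot in general be chosen quasi-invariant with a character. The functions lost this way are exactly the nonrational ones, namely pullbacks of theta quotients in \(\zeta=\frac{1}{2\pi i}\log z\). Note also that a merely local quotient representation does not allow a global Laurent expansion.

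\textbf{What is missing} is the paper's treatment of the indispensable block on the universal cover \(\C^{k-1}\times\C^{n-k}\), with deck group \(\Gamma=\Z^{k-1}\). The steps are:
\begin{enumerate}
\item Write \(\tilde h=\tilde f/\tilde g\) with coprime entire functions.
\item Normalize the automorphy factor of the lifted action to a pure character, by solving a linear PDE along the fundamental vector fields.
\item Expand only in \(w\), with coefficients \(c_J(\zeta)\) entire on \(\C^{k-1}\).
\item Form the \(\Gamma\)-invariant ratios \(c_J/c_{J_0}\) and twist them by resonant exponentials \(e^{-2\pi i\langle I_J,\zeta\rangle}\), so that they become invariant under \(W+\Gamma\).
\item Use \Cref{compact_intersection} for finiteness.
\end{enumerate}
This shows that \(\mer{N}\) is generated over \(\mathcal{M}^{W+\Gamma}\) by \(\LaurMon(\FI(\Lambda))\). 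These monomials are algebraically independent over \(\mathcal{M}^{W+\Gamma}\) because every nonzero first integral involves some \(w_j\), again by \Cref{compact_intersection}. So, contrary to your remark, nothing coming from \(W\cap\R^{k-1}\) is ``already counted in \(a\)''.

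You then still have to show that \(\mathcal{M}^{W+\Gamma}\) is the function field of a \emph{compact} torus of dimension at most \(k-1-\dim_\C W\). The quotient \(\C^{k-1}/(W+\Gamma)\) is in general non-Hausdorff, and \(W\)-invariant meromorphic functions alone can have infinite transcendence degree. One needs a complex subspace \(K\supseteq W\) with \(\overline{K+\Gamma}=K\oplus\Gamma_0\). One also needs the second part of \Cref{dim_W}, equivalently \(W+\R^{k-1}=\C^{k-1}\), to see that \(\Gamma_0\) is a lattice in \(\C^{k-1}/K\).

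Finally, even with a holomorphic map \(N\to\cpxTorus{q}\) in hand, invoking \(\alg{\cpxTorus{q}}\le q\) only bounds the pulled-back subfield. The upper bound on \(\alg{N}\) genuinely requires the generation statement above.
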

\begin{proof}
    The proof consists of several steps:
    \paragraph{Step 1}
    By construction, \(N\) is the quotient of \(\proj(U(\calT))\) by the \(\C^m\)-action \(\alpha\) defined in \cref{equation:LVMBaction}. If \(\pi : \proj(U(\calT)) \to N\) is the holomorphic projection on the orbit space, we can observe that the inverse image sheaf \(\pi^{-1}\mathcal{M}_N\) is the subsheaf of \(\mathcal{M}_{\proj(U(\calT))}\) consisting of local sections that are \(\alpha\)-invariant meromorphic functions on \(\proj(U(\calT))\); we denote it by \(\mathcal{M}_{\proj(U(\calT))}^\alpha\). Moreover, since \(\pi\) is a surjective holomorphic submersion with connected fibers, we have \(\mathcal{M}_N \iso \pi_*\mathcal{M}_{\proj(U(\calT))}^\alpha \). In particular, for global sections, this isomorphism reads as
    \begin{equation*}
        \mer{N}/\C \iso \mathcal{M}_{\proj(U(\calT))}^\alpha(\proj(U(\calT)))/\C;
    \end{equation*}
    and therefore the algebraic dimension of \(N\) coincides with the transcendence degree of the second extension.

    Since we are assuming \(k \geq 1\), we have \(\proj(U(\calT)) = \mathbb{P}((\C^*)^k \times \tilde U)\). This is isomorphic to \((\C^*)^{k-1} \times \tilde U\), so we may restrict the action to this subset and seek invariant meromorphic functions thereon; this is achieved by choosing a representative in \([\Lambda]_\C\) with \(\Lambda_1 = 0\) and projectivizing away the first coordinate of \((\C^*)^{k}\). We can observe that \(\tilde U = \C^{n-k} \setminus E\) and, by \Cref{minimal_codimension_no_ghosts}, \(E\) is the union of Zariski closed subsets of codimension \(> 1\) in \(\C^{n-k}\). By the E.~E.~Levi extension theorem (see e.g. \cite[Thm 1.1]{HarveyPolking1975} and the remark below), any meromorphic function on \((\C^*)^{k-1} \times \tilde U\) extends uniquely to \((\C^*)^{k-1} \times \C^{n-k} \). Thus, the problem reduces to the computation of
    \begin{equation*}
        \operatorname{tr}\deg \mer{(\C^*)^{k-1} \times \C^{n-k}}^\alpha/\C.
    \end{equation*}
    
    \paragraph{Step 2} Since \((\C^*)^{k-1} \times \C^{n-k}\) is a domain of holomorphy, any \(\alpha\)-invariant meromorphic function \(h\) on \((\C^*)^{k-1} \times \C^{n-k}\) can be written as the quotient of two entire functions \(h = \frac{f}{g}\). On \((\C^*)^{k-1} \times \C^{n-k}\) we introduce the coordinates \((z,w)\) with \(z \in (\C^*)^{k-1}\) and \(w \in \C^{n-k}\).
    We consider the universal covering \(\C^{k-1} \to (\C^*)^{k-1}\) given on each factor as \(\zeta_j \mapsto e^{2\pi i \zeta_j}\). Thus, the meromorphic lift \(\tilde h\) of \(h\) can be written as the quotient of two entire functions \(\tilde h = \frac{\tilde f}{\tilde g}\). Since \(\operatorname{Pic}(\C^{k-1} \times \C^{n-k})=\{0\}\), \(\tilde f\) and \(\tilde g\) can be chosen without common zero loci. Similarly, we consider the lift of the action \(\alpha\) to the universal covering, this is an action which on each coordinate writes as
   
    \begin{align*}
        \tilde \alpha : \C^m \times (\C^{k-1} \times \C^{n-k}) &\longrightarrow \C^{k-1} \times \C^{n-k}\\
        t.(\zeta_j,w_\ell) &\longmapsto (\zeta_j + \langle L_j, t\rangle , e^{2\pi i \langle M_\ell,t\rangle} w_\ell),
    \end{align*}
    where \(\Lambda = (L,M)\) is a partitioning of the points in \(\Lambda\) collecting the indispensable points in \(L = (\Lambda_2,\dots, \Lambda_{k})\) and the remaining ones in \(M = (\Lambda_{k+1},\dots, \Lambda_{n})\). Notice that, having contracted the first coordinate of \((\C^*)^k\), the first indispensable point in \(L\) is \(\Lambda_2\). Since \(\tilde h\) lifts \(h\) and the latter is \(\alpha\)-invariant, \(\tilde h\) is invariant under the lifted action \(\tilde \alpha\) and also under the action of the deck transformation group \(\Gamma = \Z^{k-1}\). These actions clearly commute. Therefore, for any \(t \in \C^m\),
    \begin{equation*}
        \frac{t\cdot \tilde f}{t \cdot \tilde g} = t\cdot \tilde h = \tilde h = \frac{\tilde f}{\tilde g};
    \end{equation*}
    and thus, again using the fact that \(\operatorname{Pic}(\C^{k-1} \times \C^{n-k})=\{0\}\), there exists a holomorphic function \(\psi_t(\zeta,w)\) such that
    \begin{equation}
    \label{LVMB_invariance}
        (t\cdot \tilde f)(\zeta,w) = e^{\psi_t(\zeta,w)} \tilde f(\zeta,w) \qquad (t\cdot \tilde g)(\zeta,w) = e^{\psi_t(\zeta,w)} \tilde g(\zeta,w).
    \end{equation}
    Similarly, the invariance of \(\tilde h\) under the covering action of \(\Gamma\) forces \(\tilde f\) and \(\tilde g\) to transform as
    \begin{equation}
    \label{deck_invariance}
        \tilde f(\zeta + \gamma,w) = e^{\varphi_\gamma(\zeta)} \tilde f(\zeta,w) \qquad \tilde g(\zeta + \gamma,w) = e^{\varphi_\gamma(\zeta)} \tilde g(\zeta,w),
    \end{equation}
    for some holomorphic function \(\varphi_\gamma(\zeta)\) depending on \(\gamma \in \Gamma\).
    \paragraph{Step 3}
    Upon multiplying both \(\tilde f\) and \(\tilde g\) by the same unit \(u \in \mathcal{O}^*(\C^{n-1})\), we claim that \(\psi_t(\zeta,w) = \psi_t\). Such a \(u\) can be found by solving a linear PDE involving the fundamental vector fields of the action \(\tilde \alpha\). Indeed, with respect to the coordinates \((\zeta,w)\), these are the holomorphic vector fields
    \begin{equation*}
        \xi_j = \sum_{\ell=1}^{k-1} L_{\ell}^j \frac{\partial}{\partial\zeta_\ell} + \sum_{i=1}^{n-k} M_{i}^j w_i \frac{\partial}{\partial w_i}.
    \end{equation*}
    Thus, any \(\tilde\alpha\)-invariant meromorphic function \(\tilde h = \frac{\tilde f}{\tilde g}\) fulfills \(\xi_j(\tilde h) = 0\) for all \(j = 1,\dots, m\), and so \(\xi_j(\tilde f) \tilde g = \tilde f \xi_j(\tilde g)\). Since \(\tilde f\) and \(\tilde g\) have been chosen without common zero loci, this condition forces \(\tilde f | \xi_j(\tilde f)\), hence there exists \(q_j(\zeta,w) \in \mathcal{O}(\C^{n-1})\) such that \(\xi_j(\tilde f) = \tilde f q_j(\zeta,w)\). Similarly, we also have \(\xi_j(\tilde g) = \tilde g q_j(\zeta,w)\). As the acting group is abelian, \([\xi_i,\xi_j]=0\) for all \(i\) and \(j\); thus
    \begin{equation*}
        \xi_i(q_j) = \xi_i \left(\frac{\xi_j(\tilde f)}{\tilde f}\right) = \frac{\xi_i(\xi_j(\tilde f)) - \xi_i(\tilde f) \xi_j(\tilde f)}{\tilde f^2} = \dots = \xi_j \left(\frac{\xi_i(\tilde f)}{\tilde f}\right) = \xi_j(q_i).
    \end{equation*}
    As we did before, the point configuration \(\Lambda\) can be thought as the block matrix
    \begin{equation*}
        \Lambda =
        \begin{bmatrix}
            L\\
            M
        \end{bmatrix}
        =
        \begin{bNiceMatrix}[margin]
            \horzbar L_1 \horzbar\\
            \vdots\\
            \horzbar L_{k-1} \horzbar\\
            \hline
            \horzbar M_1 \horzbar\\
            \vdots\\
            \horzbar M_{n-k} \horzbar
        \end{bNiceMatrix}
        =
        \begin{bNiceMatrix}[margin]
            \vertbar & & \vertbar \\
            L^{1} & \cdots & L^{m} \\
            \vertbar & & \vertbar\\[1ex]
            \hline\vspace{.5ex}
            \vertbar & & \vertbar \\
            M^{1} & \cdots & M^{m} \\
            \vertbar & & \vertbar\\
        \end{bNiceMatrix}   
    \end{equation*}
    and, by \Cref{dim_W}, the vector space \(W = \Span_{\C}(L^1,\dots, L^{m})\) has dimension \(\dim_\C W \eqqcolon r \leq \min(m,k-1)\). Since the \(L^j\)'s might not be linearly independent, we reparametrize \(\C^m\) so that  \(\{L'^1,\dots,L'^r\}\) forms a basis of \(W\) and \(L'^{r+1},\dots, L'^{m} = 0\). We denote by \(M'^j\) the other vectors after the change of basis. Moreover, by performing a linear change of coordinates on \(\C^{k-1}\), we can find new coordinates \(\zeta'_j\) such that \(\sum_{\ell=1}^r L'^j_\ell \frac{\partial}{\partial \zeta_\ell} = \frac{\partial}{\partial\zeta_j'}\) for \(1\leq j \leq r\) and \(\frac{\partial}{\partial \zeta_j} = \frac{\partial}{\partial\zeta_j'}\) for \(r < j \leq k-1\). With respect to these changes of coordinates, the fundamental vector fields are of the form
    \begin{equation*}
        \xi'_j =
        \begin{cases}
            \frac{\partial}{\partial\zeta_j'} + \sum_{i=1}^{n-k} {M'}_{i}^j w_i \frac{\partial}{\partial w_i} & 1 \leq j \leq r\\
            \sum_{i=1}^{n-k} {M'}_{i}^j w_i \frac{\partial}{\partial w_i} & r+1 \leq j\leq m.
        \end{cases}
    \end{equation*}

    The unit \(u(\zeta',w) = e^{v(\zeta',w)}\) we need is a solution of the following PDE
    \begin{equation}
        \label{equation:unit_equation}
        \frac{\xi_j'(\tilde f u)}{\tilde f u} = c_j
    \end{equation}
    for some constants \(c_j \in \C\). Equivalently, we can solve
    \begin{equation}
        \label{equation:log_unit_equation}
        \xi_j'(v) = c_j - q_j(\zeta',w).
    \end{equation}
    Being holomorphic, \(q_j(\zeta',w)\) and \(v(\zeta',w)\) can be expanded in power series as
    \begin{equation*}
        q_j(\zeta',w) = \sum_{J} p_{jJ}(\zeta') w^J,\quad v(\zeta',w) = \sum_{I} \nu_I(\zeta') w^I.
    \end{equation*}
    Observe that for any multi-index \(I\), one has \(w_j \frac{\partial}{\partial w_j} (w^I) = I_j w^I\), thus the PDE
    \begin{equation*}
        \xi_j'\left(\sum_I \nu_I(\zeta') w^I\right) = \xi_j'(v) = c_j - q_j(\zeta',w) = c_j - \sum_{J} p_{jJ}(\zeta') w^J 
    \end{equation*}
    reduces to the following equations by comparing the corresponding terms in the power series expansions. Namely, these are
    \begin{equation*}
        \begin{cases}
                \frac{\partial\nu_I}{\partial \zeta'_j} + \varepsilon_{I,j} \nu_I(\zeta') = -p_{j,I}(\zeta') & I \neq 0\\
                \frac{\partial\nu_0}{\partial \zeta'_j} = c_j - p_{j,0}(\zeta') & I=0\\ 
        \end{cases}
        \quad \text{for } j \leq r, \tag{\dag}
    \end{equation*}
    while for \(j > r\) one gets the algebraic equations
    \begin{equation*}
        \begin{cases}
            \varepsilon_{I,j} \nu_I(\zeta') = -p_{j,I}(\zeta') & I \neq 0\\
            0 = c_j - p_{j,0}(\zeta') & I=0\\
        \end{cases}
        \quad \text{for } r<j\leq m. \tag{\ddag}
    \end{equation*}
    In both groups we have collected the term \(\varepsilon_{I,j} \coloneqq \sum_{i=1}^{n-k}{M'}_i^j I_i\).

    We first take into account the equations \((\ddag)\). We notice that \(p_{j,0}(\zeta') = c_j\) is a twofold constraint. Of course, it determines the values for \(c_j\) when \(j > r\), but also forces \(p_{j0}(\zeta')\) to be constant for \(j > r\). However, this is precisely what occurs here, due to the rigidity of the condition \(\xi_j'(\tilde f) = \tilde f q_j\). Indeed, since for \(j > r\) the vector field \(\xi_j'\) acts as a scaling with respect to the coordinates we have chosen, this condition reads as
    \begin{equation*}
        \sum_{|I| \geq b} \varepsilon_{j,I} \tilde f_I(\zeta') w^I = \left(p_{j,0}(\zeta') + \sum_{J >0} p_{j,J}(\zeta') w^J \right) \left(\sum_{|I| \geq b}  \tilde f_I(\zeta') w^I \right),
    \end{equation*}
    where \(b\) is the lowest total degree such that \(f_I(\zeta') \not\equiv 0\) for some \(|I| = b\). Comparing all the terms of total degree \(b\), we have then
    \begin{equation*}
        \varepsilon_{j,I} \tilde f_I(\zeta') = p_{j,0}(\zeta') \tilde f_I(\zeta') \quad \forall |I| = b.
    \end{equation*}
    If \(I^*\) is any multi-index of total degree \(b\) such that \(f_{I^*}(\zeta') \not\equiv 0\), the equation above then forces \(p_{j,0}(\zeta') = \varepsilon_{I^*,j}\) which proves the constancy of \(p_{j,0}\) for \(j>r\). In turn, this implies that \(\varepsilon_{I,j} = \varepsilon_{I^*,j}\) for any \(I\) such that \(f_I(\zeta') \not\equiv 0\). Even though for \(j \leq r\) we could perform any choice for \(c_j\), for simplicity we henceforth fix \(c_j = p_{j,0}(0)\) for any \(j\). Moreover, if \(\varepsilon_{I;j} = 0\) for some \(I \neq 0\), the condition \(p_{j;I}(\zeta') = 0\), which must necessarily hold, is already a consequence of \cref{equation:log_unit_equation}.

    The equations appearing in \((\dag)\) are genuinely PDEs, to solve them we introduce the conformal substitutions.
    \begin{equation*}
        \mu_I(\zeta') = e^{\sum_{j=1}^r \varepsilon_{I,j} \zeta_j'}\nu_I(\zeta').
    \end{equation*}
    Thus, for \(j \leq r\), the PDEs read as
    \begin{equation*}
        \begin{cases}
        \frac{\partial \mu_I}{\partial \zeta'_j} = - e^{\sum_{j=1}^m \varepsilon_{I,j} \zeta_j'} p_{j,I}(\zeta') \eqqcolon - p'_{j,I}(\zeta') & \forall I \neq 0\\
        \frac{\partial\mu_0}{\partial \zeta'_j} = c_j - p_{j,0}(\zeta') & I=0;
        \end{cases}
    \end{equation*}
    indeed, \(p'_{j,0}(\zeta') = p_{j,0}(\zeta')\) since \(\varepsilon_{0,j} = 0\) for every \(j =1,\dots,m\). We define the family of holomorphic \(1\)-forms in the variables \(\zeta'_1,\dots,\zeta'_r\)
    \begin{equation*}
        \begin{cases}
            \omega_I = - \sum_{j=1}^r p'_{j,I}(\zeta') d\zeta_j' & I \neq 0\\
            \omega_0 = \sum_{j=1}^r (c_j - p'_{j,0}(\zeta')) d\zeta_j' & I = 0
        \end{cases}
    \end{equation*}
    whose coefficients are holomorphic functions in the whole set of coordinates \(\zeta'_1, \dots, \zeta'_{k-1}\). The condition \(\xi_j(q_i) = \xi_i(q_j)\) implies that, in the new coordinates, \(\frac{\partial p'_{jI}}{\partial \zeta_i'} = \frac{\partial p'_{iI}}{\partial \zeta_j'}\) for every \(1 \leq i,j \leq k-1\) and every multi-index \(I\). If \(d_{\leq r}\) denotes the differential with respect to the first \(r\) variables, we can observe that the last condition is equivalent to \(d_{\leq r}\omega_I = 0\) for every \(I\). Therefore, the Poincaré Lemma with parameters (see e.g. \cite[\S VII.3]{BerhanuCordaroHounie2008}) guarantees the existence of a function
    \begin{equation*}
        \mu_I(\zeta') =  \mu_I'(\zeta') + \eta_I(\zeta'_{r+1},\dots, \zeta'_{k-1})
    \end{equation*}
    which fulfills \(d_{\leq r} \mu_I' = \omega_I\) and which is holomorphic in the last \(k-1-r\) coordinates. In fact, as \(\omega\) is of type \((1,0)\), \(\delbar_{\leq r} \mu_I=0\). After reverting the conformal substitution, we find solutions \(\nu_I(\zeta')\) uniquely determined up to the addition of a holomorphic function \(\eta'_I\).

    Whenever the ratio \(\frac{p_{jI}}{\varepsilon_{I,j}}\) can be defined, it is independent of \(j\) by the compatibility condition \(\xi_j'(q_i) = \xi_i'(q_j)\). These ratios determine the corresponding \(\eta_I\)'s: imposing \(\nu_I\) to be a solution of \((\ddag)\) indeed forces
    \begin{equation*}
        \eta'_I = - \left(\frac{p_{jI}}{\varepsilon_{I,j}} + \nu_I\right).
    \end{equation*}

    Finally, we show that the determination of \(\eta'\) provided by \((\ddag)\) does not affect the PDE in \((\dag)\). This is trivial if \(\varepsilon_{I,j} = 0\), otherwise this is another consequence of the identity \(\xi_j'(q_i) = \xi_i'(q_j)\). Indeed, for \(i \leq r < j\)
    \begin{align*}
        \frac{\partial}{\partial\zeta'_i} \left(-\frac{p_{j,I}}{\varepsilon_{I,j}}\right) - \varepsilon_{I,i} \frac{p_{j,I}}{\varepsilon_{I,j}} &= -\frac{1}{\varepsilon_{I,j}} \left(\frac{\partial p_{j,I}}{\partial\zeta'_i} + \varepsilon_{I,i} p_{j,I}\right)\\
        &= -\frac{1}{\varepsilon_{I,j}}\left(\xi'_i(q_j)\right)_I\\
        &= -\frac{1}{\varepsilon_{I,j}}\left(\xi'_j(q_i)\right)_I\\
        &= -\frac{1}{\varepsilon_{I,j}} (\varepsilon_{I,j} p_{i,I}) = - p_{i,I}.
    \end{align*}

    Therefore, the unit \(u \in \mathcal{O}^*(\C^{n-1})\) solving \cref{equation:unit_equation} can be obtained by integrating \cref{equation:log_unit_equation} along the flow of the \(\xi_j\)'s. In particular, \(\psi_t(\zeta,w)\) can be reduced to the pure character
    \begin{equation*}
        \psi_t = \sum_{j=1}^m c_j t_j : \C^m \to \C.
    \end{equation*}
    \paragraph{Step 4}
    \Cref{LVMB_invariance,deck_invariance} impose some constraints on the power series expansion of \(\tilde f\) and \(\tilde g\). In general, these have the following form:
    \begin{equation*}
        \tilde f(\zeta,w) = \sum_{J \in \mathbb{N}^{n-k}} c_J(\zeta) w^J, \qquad \tilde g(\zeta,w) = \sum_{J \in \mathbb{N}^{n-k}} d_J(\zeta) w^J.
    \end{equation*}
    Comparing the \(J\)'th terms, \Cref{deck_invariance} implies
    \begin{equation*}
        c_J(\zeta + \gamma) = e^{\varphi_\gamma(\zeta)} c_J(\zeta), \qquad d_J(\zeta + \gamma) = e^{\varphi_\gamma(\zeta)} d_J(\zeta)
    \end{equation*}
    for every \(J\). Furthermore, \Cref{LVMB_invariance} forces the power series expansion of \(\tilde f\) to fulfill
    \begin{equation}
        \label{condition_on_power_series}
        \sum_{J \in \mathbb{N}^{n-k}} c_J(\zeta + L t) e^{2\pi i \langle J; M,t\rangle} w^J = e^{\psi_t} \sum_{J \in \mathbb{N}^{n-k}} c_J(\zeta) w^J,
    \end{equation}
    where we have adopted the notations \(\langle J;M,t\rangle \coloneqq \sum_{\ell =1}^{n-k} J_\ell \langle M_\ell, t \rangle\) and \(L t \dfn \sum_{j=1}^m L^j t_j\). Therefore, \cref{condition_on_power_series} holds if and only if, for any \(J\),
    \begin{equation}
        \label{coefficients_resonance}
        c_J(\zeta +  L t) = e^{\psi_t - 2\pi i\langle J; M,t \rangle} c_J(\zeta).
    \end{equation}

    Without loss of generality, we may assume \(\tilde f\) does not to vanish identically; in this case we choose a reference multi-index \(J_0\) such that \(c_{J_0}(\zeta)\) is not identically zero. Therefore, we can rewrite \(\tilde h\) as
    \begin{equation*}
        \frac{\tilde f}{\tilde g} = \frac{\sum_{J} c_J(\zeta) w^J}{\sum_J d_J(\zeta)  w^J} = \frac{\frac{1}{c_{J_0}(\zeta)}\sum_{J} c_J(\zeta) w^J}{\frac{1}{c_{J_0}(\zeta)}\sum_J d_J(\zeta)  w^J} \eqqcolon \frac{\sum_{J} r_J(\zeta) w^J}{\sum_J s_J(\zeta)  w^J},
    \end{equation*}
    where, \(r_J(\zeta) = \frac{c_J(\zeta)}{c_{J_0}(\zeta)}\) and \(s_J(\zeta) = \frac{d_J(\zeta)}{c_{J_0}(\zeta)}\). These functions have the advantage of being \(\Gamma\)-invariant, indeed
    \begin{align*}
        r_J(\zeta + \gamma) &= \frac{e^{\varphi_\gamma(\zeta)} c_J(\zeta)}{e^{\varphi_\gamma(\zeta)}c_{J_0}(\zeta)} = r_J(\zeta),\\
        s_J(\zeta + \gamma) &= \frac{e^{\varphi_\gamma(\zeta)} d_J(\zeta)}{e^{\varphi_\gamma(\zeta)}c_{J_0}(\zeta)} = s_J(\zeta).
    \end{align*}
    This means that \(r_J\) and \(s_J\) descend to meromorphic functions on \((\C^*)^{k-1}\). Since this is a domain of holomorphy, these are ratios of entire functions; moreover, as the domain is a multi-annulus, they can be written as ratios of convergent Laurent series. With respect to the coordinates \(\zeta_j\), their expansions are
    \begin{equation*}
        r_J(\zeta) = \frac{\sum_{I \in \Z^{k-1}} \rho_{JI} e^{2\pi i \langle I,\zeta \rangle}}{\sum_{I' \in \Z^{k-1}} \rho'_{JI'} e^{2\pi i \langle I',\zeta \rangle}}, \qquad s_J(\zeta) = \frac{\sum_{I \in \Z^{k-1}} \sigma_{JI} e^{2\pi i \langle I,\zeta \rangle}}{\sum_{I' \in \Z^{k-1}} \sigma'_{JI'} e^{2\pi i \langle I',\zeta \rangle}}.
    \end{equation*}
    We now examine how these expressions transform under the action of \(\tilde \alpha\) restricted to the first \(k-1\) coordinates, we can observe that \(r_J\)'s (and similarly \(s_J\)'s) fulfill the equation
    \begin{align*}
        r_J(\zeta + L t) &= \frac{c_J(\zeta + L t)}{c_{J_0}(\zeta + L t) } = \frac{e^{\psi_t - 2\pi i\langle J; M,t \rangle} c_J(\zeta)}{e^{\psi_t - 2\pi i\langle J_0; M,t \rangle} c_{J_0}(\zeta)}\\
        &=e^{2\pi i\langle J_0 - J; M,t \rangle} \frac{c_J(\zeta)}{c_{J_0}(\zeta)}= e^{2\pi i\langle J_0 - J; M,t \rangle} r_J(\zeta).
    \end{align*}
    Therefore, for any \(J\), one has
    \begin{align*}
        \frac{\sum_{I\in \Z^{k-1}} \rho_{JI} e^{2\pi i \langle I,\zeta\rangle} e^{2\pi i \langle I;L,t\rangle}}{\sum_{I'\in \Z^{k-1}} \rho'_{JI'} e^{2\pi i \langle I',\zeta\rangle} e^{2\pi i \langle I';L,t\rangle}} &= r_J(\zeta + L t) = e^{2\pi i \langle J_0-J;M,t\rangle} r_J(\zeta)\\
        &= e^{2\pi i \langle J_0-J;M,t\rangle} \frac{\sum_{I\in \Z^{k-1}} \rho_{JI} e^{2\pi i \langle I,\zeta\rangle}}{\sum_{I'\in \Z^{k-1}} \rho'_{JI'} e^{2\pi i \langle I',\zeta\rangle}},
    \end{align*}
    which can be written in a single line as
    \begin{equation*}
        \sum_{I,I'} \rho_{JI}\rho'_{JI'} e^{2\pi i \langle I +I', \zeta\rangle} e^{2\pi i \langle I;L,t\rangle} = e^{2\pi i \langle J_0-J;M,t\rangle} \sum_{I,I'} \rho_{JI} \rho'_{JI'} e^{2\pi i \langle I+I',\zeta\rangle} e^{2\pi i \langle I';L,t\rangle},
    \end{equation*}
    for all \(t \in \C^m\). This holds if and only if, for any \(\hat I \in \Z^{k-1}\),
    \begin{equation*}
        \sum_{I+I' = \hat I} \rho_{JI}\rho'_{JI'} e^{2\pi i \langle I;L,t\rangle} = e^{2\pi i \langle J_0-J;M,t\rangle} \sum_{I+I' = \hat I} \rho_{JI}\rho'_{JI'} e^{2\pi i \langle I';L,t\rangle}.
    \end{equation*}
    Since exponentials with different frequencies are linearly independent, the non-vanishing terms are precisely those satisfying
    \begin{equation}
    \label{fundamental_equation}
        \sum_{j=1}^{k-1} (I_j - I'_j) L_j + \sum_{\ell=1}^{n-k} J_\ell M_\ell = \lambda
    \end{equation}
    for some \(\lambda \in \C^m\) depending on \(J_0\) and \(M\). From this, we deduce that \(r_J(\zeta)\) does not vanish identically only if there exists \(I_J\) such that the pair \((I_J,J) \in (0,J_0) + \FI(\Lambda)\); for any such \(r_J(\zeta)\), we consider the rescaling
    \begin{equation*}
        \hat r_J(\zeta) = \frac{r_J(\zeta)}{e^{2\pi i\langle I_J,\zeta\rangle}}
    \end{equation*}
    which is \(\tilde \alpha\)-invariant and meromorphic. Proceeding similarly for the functions \(s_J\), we can write
    \begin{equation}
        \label{intermediate_expression}
        \tilde f(\zeta,w) = \sum_J \hat r_J(\zeta) e^{2\pi i \langle I_J,\zeta\rangle} w^J, \qquad \tilde g(\zeta,w) = \sum_J \hat s_J(\zeta) e^{2\pi i \langle I_J,\zeta\rangle} w^J
    \end{equation}
    where \(\hat r_J(\zeta)\) are invariant under the action of the product \(\Gamma \times \C^m\), and \(e^{2\pi i \langle I_J,\zeta\rangle} w^J = z^{I_J} w^J\) are resonant Laurent monomials.

    \paragraph{Step 5}
    For any \(\lambda \in \C^m\), the set of solutions of \Cref{fundamental_equation} is
    \begin{equation*}
        S_\lambda = \{ (I,J) \in \Z^{k-1} \times \mathbb{N}^{n-k} : (I,J-J_0) \in \FI(\Lambda)\}.
    \end{equation*}
    By the geometry of the LVMB datum, this is finite. Indeed, \(S_\lambda\) is a discrete subset of \((J_0 + \Rel(\Lambda)) \cap (\R^{k-1} \times \R^{n-k}_{\geq 0})\), which is compact by \Cref{compact_intersection}. This also implies that for any \(J\) there exist only finitely many multi-indexes \(I_J\) such that \((I_J,J) \in S_\lambda\).
    \paragraph{Step 6}
    For any \(K \in \Z^{n-1}\) we associate the Laurent monomial
    \begin{equation*}
        \LaurMon(K) = \prod_{j=1}^{k-1} z_j^{K_{j}} \cdot \prod_{j=1}^{n-k} w_j^{K_{j}};
    \end{equation*}
    this defines a map \(\LaurMon : \Z^{n-1} \to \C[\Z^{n-1}]\), that is the canonical injection of \(\Z^{n-1}\) into its group \(\C\)-algebra \cite[II.\S 3]{Lang2002}. In particular, for any linearly independent set \(\{K^1,\dots,K^{s}\} \subset \Z^{n-1}\), its image \(\{\LaurMon(K^1),\dots, \LaurMon(K^s)\}\) is algebraically independent in \(\C(\{z_i, w_j\}_{i,j})\).

    This fact gives us a natural basis with respect to which writing the resonant Laurent monomials in \Cref{intermediate_expression}: indeed, since any \(K \in S_\lambda\) can be uniquely written as \(K = (0,J_0) + \kappa_K\) with \(\kappa_K \in \FI(\Lambda)\), this determines the Laurent monomial
    \begin{equation*}
        \LaurMon(K) = \LaurMon(J_0) \LaurMon(\kappa_K),
    \end{equation*}
    which is the product of the first integral Laurent monomial \(\LaurMon(\kappa_K)\) (cf. \cite{GrašičJarrahRomanovski2025}) with a fixed monomial \(\LaurMon(J_0)\) in the coordinates \(w_j\). Moreover, if \((\kappa^1,\dots, \kappa^a)\) is a \(\Z\)-basis of \(\FI(\Lambda)\), any \(\LaurMon(K)\) is of the form
    \begin{equation*}
        \LaurMon(K) = \LaurMon(J_0) \LaurMon(\kappa^1)^{\mu_1}\cdots \LaurMon(\kappa^a)^{\mu_a}
    \end{equation*}
    for some \(\mu_j \in \Z\). Therefore, \(\tilde f\) and \(\tilde g\) can be expressed as the \emph{finite} sums
    \begin{equation*}
        \tilde f = \sum_{K \in S_\lambda} \hat r_{K}(\zeta) \LaurMon(K), \quad \tilde g = \sum_{K \in S_\lambda} \hat s_{K}(\zeta) \LaurMon(K),
    \end{equation*}
    and each \(\LaurMon(K)\) is a finite product of the first integral Laurent monomials \(\LaurMon(\kappa^j)\) times a fixed \(\LaurMon(J_0)\). Therefore, we have written \(\tilde h\) as the rational expression
    \begin{equation}
        \label{final_rational_expression}
        \tilde h = \frac{\sum_{\kappa \in S_\lambda - J_0} \hat r_{\kappa + J_0}(\zeta) \LaurMon(\kappa)}{\sum_{{\kappa \in S_\lambda - J_0}} \hat s_{\kappa+J_0}(\zeta) \LaurMon(\kappa)}
    \end{equation}
    with coefficients in the field of \(\alpha\)-invariant meromorphic functions on \((\C^*)^{k-1}\).

    \paragraph{Step 7}
    As observed in Step 2, the coefficients of the last rational expression lift to meromorphic functions on \(\C^{k-1}\); their lift is invariant under the restriction of \(\tilde\alpha\) to \(\C^{k-1}\), as well as under the deck action of \(\Gamma\). As we observed in Step 2 these actions commute. With respect to the coordinates of the covering \(\zeta_j\)'s, the product of these actions coincides with the translation along the subgroup
    \begin{equation*}
        W + \Gamma = \Span_{\C}\{L^1,\dots, L^m\} + \Gamma
        \subseteq \C^{k-1}.
    \end{equation*}
    By \Cref{dim_W}, \(\dim_{\C}W \leq \min(m,k-1)\). Note that the subgroup \(W + \Gamma \subset \C^{k-1}\) is not closed in general; consequently the orbits of the combined action of \(\tilde\alpha\) and the deck transformations \(\Gamma\) may fail to be separated. Let \(\mathcal{M}^{W + \Gamma}\) be the set of meromorphic functions of \(\C^{k-1}\) invariant under the translation by \(W + \Gamma\). We consider the following complex vector subspace
    \begin{equation*}
        K = \bigcap_{f \in \mathcal{M}^{W + \Gamma}} \{v \in \C^{k-1} : df(v) \equiv 0\} \subseteq \C^{k-1}
    \end{equation*}
    consisting of all vectors along which the directional derivative of all meromorphic functions in \(\mathcal{M}^{W + \Gamma}\) vanishes. We claim that \(K\) is the connected component containing the identity of \(\overline{K+\Gamma}\). Indeed, by definition of \(K\), any \(f \in \mathcal{M}^{W+\Gamma}\) is invariant under translation by \(K + \Gamma\); being meromorphic, \(f\) is invariant under translation by the whole \(\overline{K + \Gamma}\). Thus, when we take directional derivatives, we find \(df(v) \equiv 0\) for any \(v\) in the connected component of \(\overline{K + \Gamma}\) containing the identity. Since \(K\) contains all the subspaces with this property, \(K = (\overline{K + \Gamma})^0\). Therefore, by the classification theorem of closed subgroups of \(\R^{2k-2}\) \cite[VII.\S 1.2]{Bourbaki1989GenTopChapters5-10}, there exists a finitely generated subgroup \(\Gamma_0\) such that \(\overline{K + \Gamma} = K \oplus \Gamma_0\). Since \(K \oplus \Gamma_0 \supseteq W + \Gamma\), we have \(K \oplus (\Gamma_0 \otimes \R) \supseteq W + (\Gamma \otimes \R)\); therefore, by \Cref{dim_W},
    \begin{align*}
        \dim_\R (K \oplus (\Gamma_0 \otimes \R) ) &\geq \dim_\R (W + (\Gamma \otimes \R) )\\
        &= \dim_{\R}W + k-1 - \dim_{\R}(W \cap \R^{k-1})\\
        &= 2\dim_{\C}W + k-1 - (2 \dim_{\C}W - k + 1)\\
        &= 2k-2,
    \end{align*}
    and so \(K \oplus (\Gamma_0 \otimes \R) = \C^{k-1}\). In particular, \(\Gamma_0\) projects onto a lattice in \(\C^{k-1}/K\), hence
    \begin{equation*}
        \frac{\C^{k-1}/K}{\Gamma_0} = \mathbb{T}^q
    \end{equation*}
    is a complex torus such that \(\mer{\mathbb{T}^q} \iso \mathcal{M}^{W + \Gamma}\). One can verify that
    \begin{equation*}
        \dim_\C \cpxTorus{q} = q = k-1-\dim_{\C}K \leq k-1 - \dim_{\C}W \leq k - 1 - \left\lceil \frac{k-1}{2}\right\rceil = \left\lfloor \frac{k-1}{2} \right\rfloor.
    \end{equation*}
    \paragraph{Step 8}
    Since the first integral Laurent monomials are the images of \(I(\Lambda)\) under \(\LaurMon\), by \Cref{final_rational_expression} and the previous step,
    \begin{equation*}
        \mer{N} \iso (\mer{\mathbb{T}^q})(\LaurMon(\kappa): \kappa \in \FI(\Lambda)).
    \end{equation*}

    By \Cref{compact_intersection}, any relation in \(\Rel(\Lambda)\) involves at least one non-indispensable index; hence any non-constant first integral Laurent monomial \(\LaurMon(\kappa)\) depends explicitly on some of the coordinates \(w_j\)'s. On the other hand, the meromorphic functions of \(\mathbb{T}^q\) depend only on the coordinates \(\zeta_i\)'s, whose exponential is the set of coordinates \(z_i\)'s. Therefore, all non-constant monomials in \(\LaurMon(I(\Lambda))\) are algebraically independent of the functions in \(\mer{\mathbb{T}^q}\). Thus,
    \begin{align*}
        \alg{N} &= \operatorname{tr}\deg \mer{N}/\C = \operatorname{tr}\deg \mer{\mathbb{T}^q}/\C + \operatorname{tr}\deg \mer{\mathbb{T}^q}(\LaurMon(I(\Lambda))) / \mer{\mathbb{T}^q}\\
        &= \alg{\mathbb{T}^q} + a(\Lambda) \leq \left\lfloor \frac{k-1}{2} \right\rfloor + a(\Lambda).
    \end{align*}
    In particular, for \(k \leq 2\), \(\alg{N} \leq a(\Lambda)\). Since by \Cref{Meersseman_lower_bound} (i.e. \cite[Thm. 4]{Meersseman2000}) \(\alg{N} \geq a(\Lambda)\), with the equality when \(k=0\), we obtain \(\alg{N} = a(\Lambda)\) for any LVMB manifold \(N\) with \(k \leq 2\).
\end{proof}

Note that \(\alg{N} \leq d+m = \dim_{\C}N\). Indeed, \(a(\Lambda) \leq d\) and \(\left\lfloor \frac{k-1}{2} \right\rfloor \leq m\) as in general \(k \leq 2m +1\).
In particular, we recover the result of \cite{Meersseman2000,Bosio2001} that a Moishezon LVMB manifold is necessarily an abelian variety. Indeed, if \(\alg{N} = d+ m\), then
\begin{equation*}
    d+m \leq a(\Lambda) + \left\lfloor \frac{k-1}{2} \right\rfloor \leq \frac{2d + k -1}{2},
\end{equation*}
which forces \(2m + 1 = k\). By \cite[Prop.~2.1]{Bosio2001} (see also \cite{Meersseman2000}), such an \(N\) is a complex torus and, being Moishezon, an abelian variety.

\section{The algebraic reduction of an LVMB manifold}
As a consequence of Moishezon's results, for a compact complex manifold \(X\), the difference \(\dim_\C X - \alg{X}\) is a bimeromorphic invariant measuring how far \(X\) is from being algebraic. There exists, however, a finer bimeromorphic invariant, called the \emph{algebraic reduction}, which encodes all the birational information contained in a complex manifold. For the definition and its basic properties our main reference is \cite[\S 3]{Ueno1975}.
\begin{definition}
    Let \(X\) be a compact complex manifold. A surjective holomorphic map \(\Psi_X : \widetilde{X} \to X^\text{red}\) is an algebraic reduction of \(X\) if
    \begin{itemize}
        \item \(\widetilde{X}\) is a modification of \(X\), i.e. a compact complex manifold \(\widetilde{X}\) with a holomorphic and bimeromorphic map \(\epsilon_X : \widetilde{X} \to X\),
        \item \(X^\text{red}\) is a  projective variety with \(\dim_\C X^\text{red} = \alg{X}\),
        \item \(\epsilon_X^* \circ \Psi_X^* : \mer{X^\text{red}} \to \mer{X}\) is a ring isomorphism.
    \end{itemize}
\end{definition}
Any such \(X^\text{red}\) is defined only up to birational (and so bimeromorphic) equivalence; thus, up to taking a resolution of singularities \cite{Hironaka1964resolution1,Hironaka1964resolution2}, it can be assumed to be smooth.

A standard way of constructing a model for \(X^\text{red}\) is the following. If \(a = \alg{X} = 0\) we take \(X^\text{red} = \{\text{pt}\}\), observing that it fulfils the definition. Otherwise, we consider a set of generators \(\{g_1,\dots,g_\ell\}\) of the field extension \(\mer{X}/\C\). This set of meromorphic functions determines the meromorphic map
\begin{align*}
    \Phi^0_X : X &\longdashrightarrow \CP^{\ell}\\
    x &\longmapsto (1:g_1(x) : \dots : g_\ell(x))
\end{align*}
that is holomorphic away from a base-point locus \(B\); let \(X^\text{red}\) be the topological closure \(\overline{\Phi_0(X \setminus B)}\).  By Hironaka's resolution of indeterminacy \cite{Hironaka1964resolution1,Hironaka1964resolution2}, there exists a modification \(\epsilon_X : \widetilde{X} \to X\) of \(X\), together with a holomorphic map \(\Phi_X : \widetilde{X} \to X^{\text{red}}\) closing the following diagram
\begin{equation*}
    \begin{tikzcd}
        & \widetilde{X} \ar[dl, "\epsilon_X"'] \ar[dr, "\Phi_X"]\\
        X \ar[rr, "\Phi^0_X", dashed] && X^\text{red}
    \end{tikzcd}
\end{equation*}
The map \(\epsilon_X\) is a composition of blow-ups with smooth centers.

\begin{remark}
    In general, \(\mer{X}/\C\) fails to be purely transcendental; that is, for no transcendence basis \(\{\varphi_1,\dots,\varphi_a\}\) is the extension \(\mer{X}/\C(\varphi_1,\dots,\varphi_a)\) trivial. Indeed, in general this extension is simple and algebraic \cite[Ch.~10, \S6.5]{GrauertRemmert1984}, meaning that it is generated by some element \(\theta \in \mer{X}\) that is algebraic over \(\C(\varphi_1,\dots,\varphi_a)\). If \(\theta \in \C(\varphi_1,\dots,\varphi_a)\), then \(\Phi_X^0\) can be taken as \(\Phi_X^0 : X \dashrightarrow \CP^a\). Since this map is dominant, such a manifold \(X\) must be rational. The converse holds as well: if \(X\) is rational then \(\mer{X} \iso \C(\varphi_1,\dots,\varphi_a)\).
\end{remark}

\Cref{algebraic_dimension_formula} allows us to provide a model for the algebraic reduction of any LVMB manifold.
\begin{theorem}
    \label{algebraic_reduction}
    Let \(N(\Lambda,\calT)\) be an LVMB manifold with \(k\) indispensable indices. Then its algebraic reduction is given by \(\cpxTorus{r} \times \CP^{a(\Lambda)}\), where \(r \dfn \alg{N} - a(\Lambda)\).
\end{theorem}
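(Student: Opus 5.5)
The plan is to read off the field \(\mer{N}\) from Step~8 of the proof of \Cref{algebraic_dimension_formula} and then apply the Ueno-type construction recalled above. That step gives
\begin{equation*}
    \mer{N} \iso \mer{\cpxTorus{q}}\bigl(\LaurMon(\kappa) : \kappa \in \FI(\Lambda)\bigr),
\end{equation*}
where \(\cpxTorus{q} = (\C^{k-1}/K)/\Gamma_0\) is the complex torus built in Step~7. Moreover, the monomials \(\LaurMon(\kappa)\) are algebraically independent over \(\mer{\cpxTorus{q}}\).

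\textbf{Step 1: reduce the torus factor to an abelian variety.} Let \(\cpxTorus{q} \to \cpxTorus{r}\) be the algebraic reduction of the torus. Classically this is a surjective homomorphism onto an abelian variety \(\cpxTorus{r}\) of dimension \(r = \alg{\cpxTorus{q}}\), and it identifies \(\mer{\cpxTorus{q}} \iso \mer{\cpxTorus{r}}\) (see \cite[\S 2.6]{BirkenhakeLange1999}). The transcendence degree computation of Step~8 then gives \(r = \alg{N} - a(\Lambda)\). Hence
\begin{equation*}
    \mer{N} \iso \mer{\cpxTorus{r}}(x_1,\dots,x_{a}), \qquad a = a(\Lambda),
\end{equation*}
where \(x_j = \LaurMon(\kappa^j)\) for a \(\Z\)-basis \(\kappa^1,\dots,\kappa^a\) of \(\FI(\Lambda)\). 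Any \(\LaurMon(\kappa)\) with \(\kappa \in \FI(\Lambda)\) is a Laurent monomial in the \(x_j\), so these \(a\) functions generate the whole subfield. This field is exactly the function field of \(\cpxTorus{r} \times \CP^{a}\), namely \(\mer{\cpxTorus{r}}\) extended by \(a\) purely transcendental variables.

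\textbf{Step 2: build the meromorphic map.} The \(x_j\) are \(\alpha\)-invariant rational functions on \(\proj(U(\calT))\), so they descend to meromorphic functions on \(N\). The map \(N \dashrightarrow \cpxTorus{r}\) comes from the projection \(\proj(U(\calT)) \to (\C^*)^{k-1}\) onto the indispensable coordinates. This is well defined modulo the \(\alpha\)-action because the action on these coordinates is translation by \(W\). We compose with \(\C^{k-1}/(W+\Gamma) \to \C^{k-1}/(K\oplus\Gamma_0) = \cpxTorus{q} \to \cpxTorus{r}\); this requires \(W \subseteq K\), which holds by definition of \(K\). We therefore obtain a meromorphic map, in fact holomorphic on the torus factor,
\begin{equation*}
    \Phi^0 : N \dashrightarrow \cpxTorus{r} \times \CP^{a}, \qquad [z,w] \mapsto \bigl(p(z), (1 : x_1 : \dots : x_a)\bigr).
\end{equation*}

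\textbf{Step 3: verify the reduction property.} We resolve the indeterminacy by Hironaka, getting \(\epsilon : \widetilde N \to N\) and a holomorphic \(\Phi : \widetilde N \to \cpxTorus{r}\times\CP^{a}\). The pullback \(\epsilon^*\circ\Phi^*\) sends \(\mer{\cpxTorus{r}}\) and the coordinate functions of \(\CP^a\) onto the generators listed in Step~1. By Step~1 and the algebraic independence from Step~8, this pullback is an isomorphism of fields. Since \(\dim(\cpxTorus{r}\times\CP^a) = \alg{N}\) and the image has full transcendence degree, \(\Phi\) is dominant. Being proper, it is therefore surjective. By the definition above, \(\cpxTorus{r} \times \CP^{a(\Lambda)}\) is then an algebraic reduction, and it is projective since \(\cpxTorus{r}\) is an abelian variety.

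The main obstacle is Step~2: checking that the torus-valued component really descends to a well-defined meromorphic map on \(N\) compatible with the identification \(\mer{\cpxTorus{q}} \iso \mathcal{M}^{W+\Gamma}\). The subgroup \(W+\Gamma\) need not be closed, so the quotient must be taken by its closure, and one must check that no extra functions are lost or gained. I would handle this by working at the level of function fields. Ueno's construction only needs a generating set of \(\mer{N}\), and generators of \(\mer{\cpxTorus{r}}\) pull back to \(\alpha\)-invariant meromorphic functions of \(z\) alone, as established in Step~7.
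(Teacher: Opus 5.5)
Your argument follows the paper's proof: you take \(\mer{N}\iso\mer{\cpxTorus{q}}(\LaurMon(\FI(\Lambda)))\) from Step~8 and apply Ueno's construction. The one genuine difference is a small improvement: you map directly to \(\cpxTorus{r}\times\CP^{a(\Lambda)}\), using the holomorphic map \(N\to\cpxTorus{q}\) composed with the reduction homomorphism of the torus, whereas the paper maps to \(\CP^\ell\times\CP^{a(\Lambda)}\) via generators \(g_i\) and gets a first factor that is only birational to \(\cpxTorus{r}\). That map is well defined because \(W+\Gamma\subseteq K\oplus\Gamma_0\), so the obstacle you flag in Step~2 is not actually one.

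The one omission is the case \(k=0\). Step~8 presupposes \(k\geq 1\), so there you must invoke, as the paper does, Meersseman's proof of \cite[Thm.~4]{Meersseman2000}. That proof gives \(\mer{N}=\C(\LaurMon(\FI(\Lambda)))\), which is the same description with \(q=r=0\).
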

\begin{proof}
    In the last step of the proof of \Cref{algebraic_dimension_formula}, we showed that, if \(k\geq 1\),
    \begin{equation*}
        \mer{N} \iso \mer{\cpxTorus{q}}(\LaurMon(\FI(\Lambda))),
    \end{equation*}
    with \(\LaurMon(\FI(\Lambda)) = \{\LaurMon(\kappa) : \kappa \in \FI(\Lambda)\}\) denoting the set of Laurent monomial arising from the first integrals of the LVMB configuration presented in Step 6. Moreover, the proof of \cite[Thm.~4]{Meersseman2000} shows that the same isomorphism holds also when \(k=0\); in this case \(q = 0\) automatically. Since the field extension \(\mer{N}/\mer{\cpxTorus{q}}\) is totally transcendental, \(\mer{N}/\C\) is generated by the set
    \begin{equation*}
        \{g_1,\dots,g_\ell\} \cup \{\LaurMon(\kappa_1), \dots, \LaurMon(\kappa_{a(\Lambda)})\}
    \end{equation*}
    where \(\{g_1,\dots,g_\ell\}\) is a set of generators for \(\mer{\cpxTorus{q}}/\C\) and \(\{\kappa_1,\dots, \kappa_{a(\Lambda)}\}\) is a basis for \(\FI(\Lambda)\). We now consider Ueno's map
    \begin{align*}
        \Phi_N^0 : N &\longdashrightarrow \CP^\ell \times \CP^{a(\Lambda)}\\
        z &\longmapsto \left(\left(1:g_1(z) : \dots : g_\ell(z)\right), \left(1: \LaurMon(\kappa_1)(z): \dots : \LaurMon(\kappa_{a(\Lambda)})(z)\right)\right)
    \end{align*}
    which is dominant onto the second component \(\CP^{a(\Lambda)}\), since \(\mer{N}/\mer{\cpxTorus{q}}\) is totally transcendental. On the other hand, since \(\{g_1,\dots,g_\ell\}\) generate \(\mer{\cpxTorus{q}}/\C\), the projection \(\operatorname{pr}_1 \circ \Phi_N^0\) is dominant onto a possibly singular subvariety of \(\CP^\ell\) which, by construction, is birationally equivalent to an abelian variety \(\cpxTorus{r}\) of dimension \(r \leq q \leq \left\lfloor \frac{k-1}{2} \right\rfloor\); namely the algebraic reduction of the torus \(\cpxTorus{q}\). After finitely many blowups that resolve the indeterminacy, we therefore obtain the algebraic reduction \(\Phi_N : \widetilde{N} \to \cpxTorus{r} \times \CP^{a(\Lambda)}\).
\end{proof}
In \cite[\S IV]{Meersseman2000}, Meersseman observes that the constant \(a(\Lambda)\) counts the number of independent \emph{rational} functions that an LVMB manifold admits. This is consistent with our description of the algebraic reduction of an LVMB manifold, in which these correspond to rational functions of \(\CP^{a(\Lambda)}\). Furthermore, our model of the algebraic reduction also reveals the origin of the nonrational meromorphic functions. Indeed, from Step 7 of the proof of \Cref{algebraic_dimension_formula}, these correspond to the meromorphic functions on the residual torus, expressed in the natural coordinates \(z_j\)'s of \(\proj(U(\calT))\). Since the \(z_j\)'s are the exponentials of the covering coordinates \(\zeta_j\), those functions are not rational in the \(z_j\)'s. This consideration motivates the following definition.
\begin{definition}
    Let \(N(\Lambda,\calT^*)\) be an LVMB manifold. We call \emph{rational dimension} of \(N(\Lambda,\calT^*)\) the integer \(a(\Lambda)\).
\end{definition}
This number is a bimeromorphic invariant of an LVMB manifold \(N(\Lambda,\calT^*)\). To see this, it suffices to observe that the base of the algebraic reduction \(\widetilde N \to \cpxTorus{r} \times \CP^{a(\Lambda)}\) is unique up to birational equivalences; in particular, the sum \(a(\Lambda) + r\) is an invariant of the birational class. Moreover, we note that
\begin{equation*}
    r = h^{0,1}(\cpxTorus{r}) + h^{0,1}(\CP^{a(\Lambda)}) = h^{0,1}(\cpxTorus{r} \times \CP^{a(\Lambda)})
\end{equation*}
is the irregularity of the base of the algebraic reduction. Since the irregularity is a birational invariant, we obtain the bimeromorphic invariance of \(a(\Lambda)\). In general, \(a(\Lambda)\) is smaller than \(\alg{N(\Lambda,\calT^*)}\); however, in what follows, we show how it captures the geometry of the canonical holomorphic foliation of an LVMB manifold.

\section{The rational dimension of an LVMB manifold}

One of the features characterizing LVMB manifolds is that they all admit a canonical holomorphic foliation \(\mathcal{F}\) \cite{MeerssemanVerjovsky2004,BattagliaZaffran2015}. In the fullest generality, the leaf space \(N/\mathcal{F}\) is non-Hausdorff, but it still admits a nonrational toric structure \cite{BattagliaZaffran2017}. Note that the process described in \Cref{section:TheConstruction} can be reversed: to any LVMB datum \((\Lambda,\calT^*)\) one can associate a triangulated vector configuration \((V,\calT)\). As described in the Introduction, this, in turn, gives rise to a triple \((\Sigma,Q,\{v_i\})\) in the following way: the quasilattice \(Q\) is taken as \(\Span_\Z(w \in V)\), \(\{v_i\}\) is the set of non-ghost vectors of \(V\) and \(\Sigma\) is the complete simplicial fan encoding the triangulation datum \(\calT\). More details can be found in \cite[\S 2.1.3]{BattagliaZaffran2015}. In particular, to any LVMB datum \((\Lambda,\calT^*)\) there corresponds a quasilattice \(Q\), unique up to ambient isomorphisms. This has a geometric counterpart: the leaf space \(N(\Lambda,\calT^*)/\mathcal{F}\) of \(N(\Lambda,\calT^*)\) can be naturally identified with the complex toric quasifold \(X_\Sigma\) associated to the triple \((\Sigma,Q,\{v_i\})\). 

In the following we will show that the rational dimension of \(N(\Lambda,\calT^*)\) coincides with the rationality degree of \(Q\).

\subsection{An explicit formula for the nonrationality degree of a quasilattice}
In the Introduction we discussed how a quasilattice \(Q \subset \R^d\) can be decomposed as \(Q = L \oplus Q_\text{d}\): while \(Q_\text{d}\) is a canonical subgroup, \(L\) depends on a choice \cite{BattagliaPrato2026}. By definition, the nonrationality degree of \(Q\) is \(\deg_\text{NR}Q = \dim_\R (Q_\text{d} \otimes \R)\); this clearly equals \(d - \rk L\). To compute \(\deg_\text{NR}Q\), we employ vector configurations; this will also facilitate relating this quasilattice invariant to the LVMB construction.

Given a quasilattice \(Q\), we consider any set of generators \(\{v_1,\dots, v_n\}\), and the vector configuration that they form \(V = (v_1, \dots, v_n)\). Note that neither \(V\) nor \(n\) can be taken canonically; in particular \(n\) can be any integer greater than or equal \(\rk Q\). Nevertheless, this gives rise to the following invariant quantities.
\begin{lemma}
    \label{rationality_measure_is_intrinsic}
    Let \(Q\) be a quasilattice in \(\R^d\) and \(\{v_1,\dots,v_n\}, \{w_1,\dots,w_{n'}\}\) be two set of generators. Consider the vector configurations \(V = (v_1,\dots,v_n)\) and \(V' = (w_1,\dots,w_{n'})\), then \(n - a(V) = n' - a(V')\) and \(n - b(V) = n' - b(V')\).
\end{lemma}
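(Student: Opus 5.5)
The plan is to reduce any two generating sets to a common one. We compare each of \(V\) and \(V'\) with the concatenated configuration \(V'' = (v_1,\dots,v_n,w_1,\dots,w_{n'})\), which also generates \(Q\). It then suffices to prove the following claim: if a generating configuration \(V\) of \(Q\) is extended to \(V^+ = (v_1,\dots,v_n,u)\) by one vector \(u \in Q\), then \(a(V^+) = a(V)+1\) and \(b(V^+) = b(V)+1\). Iterating the claim \(n'\) times from \(V\) and \(n\) times from \(V'\) gives
\[
n - a(V) = (n+n') - a(V'') = n' - a(V'),
\]
and the same chain of equalities for \(b\).

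To prove the claim, write \(u = \sum_j m_j v_j\) with \(m_j \in \Z\); this is possible since \(u \in Q = \Span_\Z(V)\). Consider the linear isomorphism
\[
\phi : \R^{n}\times\R \to \R^{n+1}, \qquad (c,s) \mapsto (c + s\,m,\; s).
\]
One checks that \(\phi\) maps \(\Rel(V)\oplus \R e\) onto \(\Rel(V^+)\), where \(e\) is the last coordinate vector. Indeed, \((c',s) \in \Rel(V^+)\) if and only if \(c' - s\,m \in \Rel(V)\). Since \(m\) has integer entries, \(\phi\) and \(\phi^{-1}\) are given by matrices in \(\GL(n+1,\Z)\), so both preserve \(\Q^{n+1}\). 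Therefore \(\phi\) sends rational subspaces to rational subspaces, and hence commutes with taking rational cores and rational envelopes.

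It remains to compute these for the direct sum \(\Rel(V)\oplus\R e\). Because the second summand is a rational coordinate line, we have
\[
\ratcore{\Rel(V)\oplus\R e} = \ratcore{\Rel(V)}\oplus\R e
\quad\text{and}\quad
(\Rel(V)\oplus\R e)^\Q = \Rel(V)^\Q\oplus\R e.
\]
For the core, project any rational vector in \(W\oplus\R e\) onto the first \(n\) coordinates; this projection is rational and preserves rationality. For the envelope, intersect a rational subspace \(T \supseteq W\oplus\R e\) with \(\R^n\times\{0\}\). The claim follows by taking dimensions.

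The main obstacle, though a mild one, is the rational-core and envelope bookkeeping under \(\phi\). This is exactly where integrality is needed, so that \(\phi\) is defined over \(\Q\); this is the only point where we use that the configurations generate the same \(\Z\)-module. Note, however, that rational coefficients \(m_j\) would already suffice. The rest is routine.
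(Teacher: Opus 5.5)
Your proof is correct and is essentially the paper's argument. There, the integral shear \(F=\begin{bmatrix}1 & f\\ 0 & 1\end{bmatrix}\) coming from Schanuel's lemma identifies \(\ker(\pi+\pi')=\Rel(V'')\) with \(\Rel(V)\oplus\R^{n'}\) in a single step, and \(G\) identifies it with \(\R^{n}\oplus\Rel(V')\). You instead factor this into \(n'\) (resp.\ \(n\)) elementary shears, and you make explicit the core/envelope computation for a direct sum with a rational coordinate subspace, which the paper only asserts. Two small fixes: with \(u=\sum_j m_j v_j\) one has \((c',s)\in\Rel(V^+)\) if and only if \(c'+s\,m\in\Rel(V)\), so you should take \(\phi(c,s)=(c-s\,m,\,s)\); and reaching \(V''\) from \(V'\) by appending the \(v_j\) at the end requires a coordinate permutation, which is harmless since permutation matrices lie in \(\GL(n+n',\Z)\).
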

\begin{proof}
    Let \(\{e_j\}_j\) and \(\{e_j'\}_j\) be the canonical bases of \(\R^n\) and \(\R^{n'}\) respectively. Let \(\pi : \R^n \to \R^d\) be the map sending \(e_j \mapsto v_j\), and \(\pi' : \R^{n'} \to \R^d\) defined by \(e'_j \to w_j\). Then, by definition \(\Rel(V) = \ker \pi \subseteq \R^{n}\) and \(\Rel(V') = \ker\pi' \subseteq \R^{n'}\).
    We now show that, in our setting, the isomorphism provided by Schanuel's Lemma \cite[Lemma~11.28]{Faith1973} is rational. Consider the following commutative diagram with exact rows
    \begin{equation*}
        \begin{tikzcd}[column sep = large]
            0 \ar[r] & \Rel(V) \oplus \R^{n'} \ar[r,"\ker \pi \oplus \operatorname{id}"] & \R^n \oplus \R^{n'} \ar[r,"{(\pi,0)}"] & \R^d \ar[r] & 0\\
            0 \ar[r] & \ker(\pi + \pi') \ar[u] \ar[d] \ar[r] & \R^n \oplus \R^{n'} \ar[u,"F"'] \ar[d,"G"] \ar[r, "\pi + \pi'"] & \R^d \ar[u,equals] \ar[d,equals] \ar[r] & 0\\
            0 \ar[r] & \R^{n} \oplus \Rel(V') \ar[r, "\operatorname{id} \oplus \ker \pi'"] & \R^n \oplus \R^{n'} \ar[r,"{(0,\pi')}"] & \R^d \ar[r] & 0
        \end{tikzcd}
    \end{equation*}
    There, \(F,G\) are constructed as follows: since \(\pi(\Z^n) = Q = \pi'(\Z^{n'})\), for any \(e_j' \in \Z^{n'}\) there exists \(h_j = \sum_{i=1}^n h_j^i e_i\in \Z^n\) such that \(\pi'(e_j') = \pi(h_j)\). The assignment \(e_j' \mapsto h_j\) defines a linear map \(f : \R^{n'} \to \R^n\) which is rational since \(h_j^i \in \Z\) and satisfies \(\pi' = \pi \circ f\). This map induces \(F\) as the rational linear isomorphism represented by the matrix
    \begin{equation*}
        \begin{bmatrix}
            1_{\R^n} & f\\
            0 & 1_{\R^{n'}}
        \end{bmatrix};
    \end{equation*}
    moreover, it makes the diagram commute. Defining \(G\) symmetrically, we obtain a rational isomorphism \(\Phi = G \circ F^\inv\) that restricts to a rational isomorphism
    \begin{equation*}
        \varphi : \Rel(V) \oplus \R^{n'} \longrightarrow \R^{n} \oplus \Rel(V')
    \end{equation*}
    by the universal property of kernels.
    
    Therefore, if \(W\) is the rational core of \(\Rel(V)\), by maximality, \(\widetilde W \dfn W \oplus \R^{n'}\) is the rational core of \(\Rel(V) \oplus \R^{n'}\). Since \(\varphi\) is rational, its image \(\varphi(\widetilde W)\) is the rational core of \( \R^{n} \oplus \Rel(V')\). This splits as \(\varphi(\widetilde W) = \R^n \oplus W'\), with \(W'\) the rational core of \(\Rel(V')\). A fortiori, this implies
    \begin{equation*}
        n - a(V) = n - \dim W = n + n' - \dim \widetilde W = n + n' - \dim \varphi(\widetilde W) = n' - \dim W' = n' - a(V').
    \end{equation*}
    
    By an entirely analogous argument applied to the rational envelopes of \(\Rel(V)\) and \(\Rel(V')\) respectively, we also deduce \(n - b(V) = n' - b(V')\).
\end{proof}
Therefore, the quantities \(n - a(V)\) and \(n - b(V)\) are independent of the choice of generators and are invariants of the quasilattice. In fact, they are tightly linked to its geometry.

\begin{proposition}
    \label{rationality_measures}
    Let \(Q\) be a quasilattice in \(\R^d\) generated by the set \(\{v_1,\dots,v_n\}\), and let \(V = (v_1,\dots,v_n)\) be the corresponding vector configuration. Let \(Q = L \oplus Q_\text{d}\) be a splitting of \(Q\) as above, then \(\operatorname{rk} Q = n - a(V)\) and \(\operatorname{rk}L = n-b(V)\).
\end{proposition}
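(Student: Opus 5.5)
The two identities are handled separately: the first is a rank computation on the integer points of \(\Rel(V)\), and the second is a duality argument that detects \(L\) through the integer-valued linear functionals on \(Q\).

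\textbf{Rank of \(Q\).} Let \(\pi : \R^n \to \R^d\), \(e_j \mapsto v_j\), as in the proof of \Cref{rationality_measure_is_intrinsic}. Then \(Q = \pi(\Z^n) \iso \Z^n/(\ker\pi \cap \Z^n) = \Z^n/(\Rel(V)\cap\Z^n)\). The subgroup \(\Rel(V) \cap \Z^n\) is saturated in \(\Z^n\), and its \(\R\)-span is precisely \(\ratcore{\Rel(V)}\), so its rank is \(a(V)\). Hence \(\rk Q = n - a(V)\).

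\textbf{Rank of \(L\).} Set \(E = (\overline{Q})^0 = Q_\text{d}\otimes\R\), so \(\rk L = d - \dim E\). Consider the group
\[
Q^\vee = \{\varphi \in (\R^d)^* : \varphi(Q) \subseteq \Z\}.
\]
I will compute \(\rk Q^\vee\) in two ways.

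First, from the side of \(\R^d\). By the structure theorem for closed subgroups \cite[VII.\S 1.2]{Bourbaki1989GenTopChapters5-10}, \(\overline{Q} = E \oplus \Gamma_0\) with \(\Gamma_0\) discrete. Since \(Q\) spans \(\R^d\), the image of \(Q\) in \(\R^d/E\) is a full lattice. Every \(\varphi \in Q^\vee\) is continuous and \(\Z\)-valued on \(Q\), hence on \(\overline{Q}\). It is therefore constant on the connected component \(E\), so it vanishes on \(E\). Consequently \(Q^\vee\) is the dual lattice of that image, and \(\rk Q^\vee = d - \dim E = \rk L\).

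Second, from the side of \(\R^n\). Pulling back gives an injective map \(\varphi \mapsto c = \trans{\pi}\varphi\); it is injective because \(\pi\) is surjective. Its image lies in \(\Z^n\), since \(c_j = \varphi(v_j)\), and in \(\Rel(V)^\perp\), since \(\varphi\circ\pi\) kills \(\ker\pi\). Conversely, any \(c \in \Rel(V)^\perp \cap \Z^n\) factors through \(\pi\) as some \(\varphi\), and \(\varphi(Q) = c(\Z^n) \subseteq \Z\). Thus \(Q^\vee \iso \Rel(V)^\perp \cap \Z^n\), whose rank is \(\dim \ratcore{\Rel(V)^\perp}\).

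To conclude, I use that orthogonal complement exchanges rational cores and rational envelopes. A subspace \(T\) is rational if and only if \(T^\perp\) is, and \(T \supseteq W\) if and only if \(T^\perp \subseteq W^\perp\). Hence \((W^\Q)^\perp = \ratcore{W^\perp}\). With \(W = \Rel(V)\) this gives \(\dim\ratcore{\Rel(V)^\perp} = n - b(V)\), and therefore \(\rk L = n - b(V)\).

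The main obstacle is the identification \(\rk Q^\vee = d - \dim E\), which depends on the closed-subgroup structure theorem and on \(Q\) spanning \(\R^d\). By contrast, the rank count \(\rk(\Rel(V)\cap\Z^n) = a(V)\) and the orthogonality between core and envelope are routine.
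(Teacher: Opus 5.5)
Your proof is correct. For $\rk Q = n-a(V)$ you argue essentially as the paper does: it computes $\dim_\Q(Q\otimes_\Z\Q)$, you compute $\rk\bigl(\Z^n/(\Rel(V)\cap\Z^n)\bigr)$. For $\rk L = n-b(V)$, however, your route is genuinely different.

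The paper picks a generating set $V'$ adapted to the splitting $Q=L\oplus Q_\text{d}$. It then shows by a density argument that the rational envelope of $\Rel(V')$ is exactly $(K_1\oplus D_2)\otimes\R$: a nonzero rational functional that is integral on $D_2$ cannot survive the density of $Q_\text{d}$ in $Q_\text{d}\otimes\R$. Finally it needs the Schanuel-type \Cref{rationality_measure_is_intrinsic} to transfer $n'-b(V')$ back to $n-b(V)$.

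You instead pass to the group $Q^\vee$ of integer-valued functionals. The same density phenomenon reappears as the fact that every $\varphi\in Q^\vee$ kills $(\overline{Q})^0$, which gives $\rk Q^\vee=\rk L$. Pulling back along $\pi$ gives $Q^\vee\iso\Rel(V)^\perp\cap\Z^n$. The exchange $(W^\Q)^\perp=\ratcore{W^\perp}$, which is the principle behind \eqref{rat_degree_Gale_Dual}, then closes the argument.

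Your version buys two things. First, it works for an arbitrary generating set directly, with no adapted generators and no appeal to \Cref{rationality_measure_is_intrinsic}. That lemma instead follows as a corollary, since $\rk Q$ and $\rk L$ are intrinsic. Second, in the LVMB setting $\Rel(V)^\perp$ is the relation space of the Gale dual $\hat\Lambda^\R$, so your isomorphism reads $Q^\vee\iso\FI(\Lambda)$. This gives \Cref{rat_dimension_is_rat_measure} immediately: the first integrals are exactly the integral functionals on the quasilattice.

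The paper's route, in exchange, gives an explicit description of $\Rel(V')^\Q$ in terms of the splitting. It shows how $L$ and $Q_\text{d}$ separately contribute to that envelope.
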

\begin{proof}
    For the first part we take \(S = \Z\setminus\{0\}\) as the maximal multiplicative system in \(\Z\), so that
    \begin{equation*}
        \Span_{\Q}(v_1,\dots,v_n) = Q \otimes_{\Z} \Q \iso S^{-1}Q,
    \end{equation*}
    where the isomorphism is canonically induced by the bilinear map \((x,\frac{p}{q}) \mapsto \frac{px}{q}\) via the universal property of the tensor product. Since \(Q\) is a torsion-free, finitely generated module over a PID
    \begin{align*}
        \rk Q &= \rk S^\inv Q\\
        &= \dim_{\Q} \Span_\Q(v_1,\dots,v_n)\\
        &= n - \dim_{\Q} (\Rel(V) \cap \Q^n)\\
        &= n - \dim_{\R} \ratcore{\Rel(V)}\\
        &= n-a(V).
    \end{align*}

    For the second part, we consider a new vector configuration \(V' = (w_1,\dots,w_{n'})\) which we assume to be adapted to the splitting \(Q = L \oplus Q_\text{d}\), i.e. such that \(\{w_1,\dots,w_r\}\) generates \(L\), and \(\{w_{r+1},\dots,w_{n'}\}\) generates \(Q_\text{d}\). We consider the linear map
    \begin{align*}
        \pi:\Z^{n'} &\longrightarrow Q\\
        e_i &\longmapsto w_i,
    \end{align*}
    and the subgroups
    \begin{equation*}
        D_1 = \Span_{\Z}(e_1,\dots,e_r) \subseteq \Z^{n'}, \quad D_2 = \Span_{\Z}(e_{r+1},\dots,e_{n'}) \subseteq \Z^{n'}.
    \end{equation*}
    With respect to the splitting \(D_1 \oplus D_2 = \Z^{n'}\), \(\pi\) decomposes as \(\pi_1 \oplus \pi_2\) according to the following diagram
    \begin{equation*}
        \begin{tikzcd}
            0 \ar[r] & K_1 \ar[r] \ar[d] & D_1 \ar[r,"\pi_1"] \ar[d] & L \ar[r] \ar[d] & 0\\
            0 \ar[r]& \ker\pi \ar[r]& \Z^{n'} \ar[r,"\pi"]& Q \ar[r]& 0\\
            0 \ar[r] & K_2 \ar[r] \ar[u]& D_2 \ar[r,"\pi_2"] \ar[u] & Q_{\text{d}} \ar[u] \ar[r]& 0
        \end{tikzcd}
    \end{equation*}
    where the vertical arrows are respectively the canonical coprojections and the canonical maps induced by them on kernels. Since \(L\) is a lattice in a complement of \(Q_\text{d} \otimes \R \subseteq \R^d\), we have \(\ker(\pi_1\otimes\R) = K_1 \otimes \R\). Moreover, with respect to the identifications above, 
    \begin{equation*}
        L \iso \frac{D_1}{K_1} \iso \frac{\Z^{n'}}{K_1 \oplus D_2}.
    \end{equation*}
    Let \(T = (K_1 \oplus D_2)\otimes \R \subseteq \R^{n'}\); this is a rational space containing \(\Rel(V')\). Indeed, \(K_1 \otimes \R = \ker(\pi_1\otimes\R)\),
    \(D_2 \otimes \R \supseteq\ker(\pi_2\otimes\R)\) and
    \begin{equation}
        \label{many_inclusions}
        (K_1 \oplus K_2) \otimes \R \subseteq \ker(\pi \otimes \R) = \ker(\pi_1 \otimes \R) \oplus \ker(\pi_2 \otimes \R) = \Rel(V').
    \end{equation}
    We claim that \(T\) is the rational envelope of \(\Rel(V')\). To verify this, consider any rational subspace \(T' \subseteq \R^{n'}\) containing \(\Rel(V')\); by \cref{many_inclusions} it also contains \(K_1 \oplus K_2\). Such a \(T'\) must contain \(D_2 \otimes \R\) as well. Were this not the case, we could find a non-trivial linear functional \(f : D_2\otimes \R \to \R\) taking rational values on \(D_2\) and satisfying \(T' \cap D_2 \subseteq \ker f\); up to rescaling it, we may also assume \(f(D_2) \subseteq \Z\). Because of the inclusions
    \begin{equation*}
        \ker f \supseteq T' \cap (D_2 \otimes \R) \supseteq \Rel(V') \cap  (D_2 \otimes \R) \supseteq \ker(\pi_2\otimes\R),
    \end{equation*}
    \(f\) factors as 
    \begin{equation*}
        \begin{tikzcd}
            D_2 \otimes \R \ar[r, "f"] \ar[d, "p"'] & \R\\
            \frac{D_2\otimes \R}{\ker(\pi_2\otimes\R)} \ar[ur, "\bar f"']
        \end{tikzcd}
    \end{equation*}
    and by construction \(\frac{D_2\otimes \R}{\ker(\pi_2\otimes\R)} \iso Q_\text{d}\otimes \R \): this isomorphism identifies \(Q_\text{d}\) with a dense subgroup of the quotient. Evaluating \(\bar f\) on \(Q_\text{d}\), we have \(\bar f(Q_\text{d}) = (\bar f \circ p)(D_2) = f(D_2) \subseteq \Z \). On the other hand, since \(\bar f\) is continuous, \(\bar f(Q_\text{d})\) is dense in \(f(D_2 \otimes \R)\): this forces \(f(D_2) = \{0\}\), a contradiction. Therefore, \(T' \supseteq T\), whence \(T\) is the rational envelope of \(\Rel(V')\). Thus, \(\dim_{\R} T = b(V')\) and \(\operatorname{rk}L = n' - b(V') = n - b(V)\), with the second equality following from \Cref{rationality_measure_is_intrinsic}.
\end{proof}
As \(\deg_\text{NR}Q = d - \rk L\), this \namecref{rationality_measures} immediately yields a formula for the nonrationality degree of a quasilattice \(Q\).
\begin{proposition}
    \label{rationality_degree_formula}
    Let \(Q \subset \R^d\) be a quasilattice generated by a vector configuration \(V\). Then its nonrationality degree is \(\deg_\text{NR} Q = b(V) - (n-d)\).
\end{proposition}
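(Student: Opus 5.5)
This is a direct consequence of \Cref{rationality_measures}; the plan is simply to combine that result with the definition of the nonrationality degree.

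First, recall from the Introduction and the beginning of this subsection that \(Q = L \oplus Q_\text{d}\) with \(Q_\text{d}\otimes\R = (\overline{Q})^0\). Hence \(\deg_\text{NR}Q = \dim_\R (Q_\text{d}\otimes\R) = d - \rk L\). The step that needs a line of justification is that \(L\) is a lattice in a complement of \(Q_\text{d}\otimes\R\) whose real span together with \(Q_\text{d}\otimes\R\) fills \(\R^d\). This holds because \(Q\) contains a set of \(\R\)-generators of \(\R^d\), so \(L\otimes\R + Q_\text{d}\otimes\R = \R^d\), and the sum is direct by \cite[Thm.~2]{BattagliaPrato2026}.

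Second, apply \Cref{rationality_measures} to the generating configuration \(V = (v_1,\dots,v_n)\). This gives \(\rk L = n - b(V)\). Substituting, we obtain \(\deg_\text{NR}Q = d - (n - b(V)) = b(V) - (n-d)\), which is the claimed formula.

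Finally, I would remark that the expression is independent of the chosen generators, in accordance with \Cref{rationality_measure_is_intrinsic}. I expect no real obstacle beyond the identification \(\deg_\text{NR}Q = d - \rk L\), which is already stated in the text preceding \Cref{rationality_measure_is_intrinsic}.
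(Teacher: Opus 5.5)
Your proposal is correct and follows the paper's route exactly: substitute \(\rk L = n - b(V)\) from \Cref{rationality_measures} into \(\deg_\text{NR}Q = d - \rk L\), which the paper also takes from \cite[Thm.~2]{BattagliaPrato2026}. Your extra line on why \(L\otimes\R\) and \(Q_\text{d}\otimes\R\) together span \(\R^d\) is a harmless elaboration of that same cited fact.
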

Notably, as \(b(V) = \dim_\R \Rel(V)^\Q \geq n - d\), this quantity is non-negative, and it vanishes only if \(b(V) = n - d\). As expected from \cite[Cor.~1]{BattagliaPrato2026}, this is precisely the condition for \(Q\) to be a lattice. Indeed, a quasilattice in \(\R^d\) is a lattice if and only if \(\rk Q = d\); by \Cref{rationality_measures}, this occurs if and only if \(a(V) = n - d\) and, in turn, if and only if \(b(V) = n - d\) as well.

\subsection{The relation with the rational dimension}
As we have seen, to any vector configuration \(V\) we can associate a quasilattice \(Q\) whose rationality degree is \(n - b(V)\). By Gale duality, this coincides with \(a(\hat\Lambda^\R)\) for any graded Gale dual configuration of vectors \(\hat\Lambda^\R\). In turn, by the identification of \(\Rel(\hat\Lambda^\R)\) with \(\Rel_\text{aff}(\Lambda^\R)\) and \Cref{a_s_are_equal}, this integer coincides with \(a(\Lambda)\). We can therefore state the following theorem, relating the rational dimension of an LVMB manifold \(N(\Lambda,\calT^*)\) to the nonrationality degree of the quasilattice \(Q\) associated to the LVMB datum \((\Lambda,\calT^*)\).
\begin{theorem}
    \label{rat_dimension_is_rat_measure}
    Let \(N(\Lambda,\calT^*)\) be the LVMB manifold associated to the LVMB datum \((\Lambda,\calT^*)\); let also \(Q \subseteq \R^d\) be the associated quasilattice. Then, the rational dimension of \(N(\Lambda,\calT^*)\) is \(d - \deg_\text{NR}Q\).
\end{theorem}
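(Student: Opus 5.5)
The plan is to chain together three equalities already available in the paper. Throughout, \((V,\calT)\) is the triangulated vector configuration with \(n\) vectors in \(\R^d\) associated to \((\Lambda,\calT^*)\). We take it odd and balanced, as in \Cref{section:TheConstruction}, and \(Q=\Span_\Z(V)\).

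\textbf{Step 1: the nonrationality degree.} By \Cref{rationality_degree_formula}, \(\deg_\text{NR}Q = b(V)-(n-d)\). Equivalently, using \Cref{rationality_measures}, \(d-\deg_\text{NR}Q = \rk L = n-b(V)\). Here it matters that all \(n\) vectors of \(V\), ghosts included, generate \(Q\), as in the definition of the associated quasilattice. By \Cref{rationality_measure_is_intrinsic} the answer does not depend on which generating configuration is chosen.

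\textbf{Step 2: Gale duality.} Let \(\hat\Lambda^\R\) be a graded Gale dual of \(V\), obtained from the basis \((\mathbbm{1}_n,f_1,\dots,f_{2m})\) of \(\Rel(V)\). Equation~\labelcref{rat_degree_Gale_Dual} gives \(n-b([V]_\R)=a(\mathfrak{G}([V]_\R))=a(\hat\Lambda^\R)\). This uses the fact that \(a\) and \(b\) depend only on the class under \(\sim\), which holds by construction.

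\textbf{Step 3: from vectors to points.} The correspondence between graded vector configurations and point configurations identifies \(\Rel(\hat\Lambda^\R)\) with \(\Rel_\text{aff}(\Lambda^\R)\). This identification is literal: both are the same subspace of \(\R^n\), since the first coordinate of each \(\hat\Lambda^\R_j\) is \(1\) and the remaining coordinates are \(\Lambda^\R_j\). Hence \(a(\hat\Lambda^\R)=a(\Lambda^\R)\). One must also check that changing the affine chart, i.e.\ applying a matrix \(T\) of the displayed form, does not change this space. It does not, because \(T\) acts on the ambient space and relations are preserved. Finally, \Cref{a_s_are_equal} gives \(a(\Lambda^\R)=a(\Lambda)\), which by definition is the rational dimension.

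\textbf{Main obstacle.} The delicate point is normalization. The rational dimension is defined via the convention \(\Lambda_1=0\), which identifies \(\Rel_\text{aff}(\Lambda)\) with a subspace of \(\R^{n-1}\). I will check that this is only a linear (in fact rational) reparametrization of the affine relations, so their rational core has the same dimension. I will also check that the quasilattice \(Q\) is unaffected by the ghost vectors added to make \(V\) odd and balanced. Appending zero vectors leaves \(Q\) unchanged. Appending \(-\sum v_i\) also leaves \(Q\) unchanged, since that vector already lies in \(Q\).

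Once these bookkeeping points are settled, composing Steps 1–3 yields
\[
d-\deg_\text{NR}Q = n-b(V) = a(\hat\Lambda^\R) = a(\Lambda^\R) = a(\Lambda),
\]
which is the statement.
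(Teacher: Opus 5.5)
Your proposal is correct and follows the paper's own argument, which is exactly the chain \(d-\deg_\text{NR}Q = n-b(V) = a(\hat\Lambda^\R) = a(\Lambda^\R) = a(\Lambda)\). This chain comes from \Cref{rationality_degree_formula}, the Gale duality identity \labelcref{rat_degree_Gale_Dual}, the identification \(\Rel(\hat\Lambda^\R)=\Rel_\text{aff}(\Lambda^\R)\), and \Cref{a_s_are_equal}; your extra bookkeeping (the normalization \(\Lambda_1=0\) as a rational reparametrization, and ghost vectors not changing \(Q\)) is sound, though the ghost-vector check is not strictly needed, since the configuration dual to an LVMB datum is automatically odd and balanced.
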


\section{A two-parameter family of generalized Calabi--Eckmann threefolds}
LVMB manifolds appear as natural generalizations of well-known constructions in complex geometry \cite{LopezVerjovsky1997,Meersseman2000}: namely complex tori, Hopf manifolds \cite{Hopf1948} and Calabi--Eckmann manifolds \cite{Calabi-Eckmann}. Prior to the introduction of LVMB manifolds, the same classes of manifolds were generalized by J.~J.~Loeb and M.~Nicolau in \cite{LoebNicolau1996}, where the authors provide a systematic construction of complex structures on the product of odd-dimensional spheres \(S^{2n_1-1} \times S^{2n_2-1}\). Although there is no inclusion between their class of manifolds and LVMBs, all the manifolds arising from the linear case of their construction belong to the LVM class.

In this section we describe a two-parameter family of LVMB manifolds that also belong to the class described by Loeb and Nicolau. While these are all diffeomorphic to \(S^3 \times S^3\), the leaf space of their canonical foliation is a family of toric quasifolds parametrized by two real numbers. These quasifolds are a variation of the one-parameter family of generalized Hirzebruch surfaces described in \cite{BattagliaPratoZaffran2019}; indeed both families contain all the Hirzebruch surfaces \(\hirz{n}\), and  in particular \(\hirz{0} = \CP^1 \times \CP^1\). Viewing Calabi--Eckmann threefolds as LVMB manifolds, it can be readily verified that the leaf space of their canonical foliation is \(\hirz{0}\) \cite{MeerssemanVerjovsky2004}. Thus, the two-parameter family of LVMB manifolds constructed below naturally constitutes a class of generalized Calabi--Eckmann threefolds.

We begin by considering the vector configuration \(V\) in \(\R^2\) consisting of
\begin{equation*}
    v_1 =
    \begin{bmatrix}
        x-1\\
        -y
    \end{bmatrix},\;
    v_2 =
    \begin{bmatrix}
        1\\
        0
    \end{bmatrix},\;
    v_3 =
    \begin{bmatrix}
        -x\\
        y
    \end{bmatrix},\;
    v_4 =
    \begin{bmatrix}
        0\\
        -1
    \end{bmatrix},\;
    v_5 =
    \begin{bmatrix}
        0\\
        1
    \end{bmatrix},
\end{equation*}
with \(x > 0\) and \(y \geq 0\). As triangulation, we take the one generated by the maximal simplexes
\begin{equation*}
    \calT = \left\langle\{2,5\}, \{3,5\}, \{3,4\}, \{2,4\}\right\rangle;
\end{equation*}
thus, \(v_1\) is the only ghost vector in the triangulated vector configuration \((V,\calT)\). For \(x = 1\) and \(y = n \in \mathbb{N}\), this triangulated vector configuration encodes the toric datum of a Hirzebruch surface \(\hirz{n}\). To construct an associated family of LVMB manifolds, we take as Gale dual configuration
\begin{equation*}
    \hat\Lambda^\R_{x,y} =
    \begin{bmatrix}
        1 & 0 & 0\\
        1 & x & 0\\
        1 & 1 & 0\\
        1 & y & 1\\
        1 & 0 & 1\\
    \end{bmatrix};
\end{equation*}
choosing \(\Pi = [1, i] \in \C^{1\times 2}\) as period matrix, the corresponding LVMB datum \((\Lambda,\calT^*)\) is given by the list of points in \(\aff^1_\C\)
\begin{equation*}
    \Lambda_{x,y} = (0,x,1,y+i,i),
\end{equation*}
with the virtual chamber
\begin{equation*}
    \calT^* = \left\langle\{1,3,4\}, \{1,2,4\}, \{1,2,5\}, \{1,3,5\}\right\rangle.
\end{equation*}
Therefore, the open subset \(U(\calT)\) in the LVMB construction is
\begin{equation*}
    U(\mathcal{T}) = \C^* \times (\C^2\setminus\{0\}) \times (\C^2\setminus\{0\}) \subset \C^5.
\end{equation*}
The LVMB manifold \(N = N(\Lambda,\calT^*)\) is then obtained as the orbit space of the action
\begin{align*}
    \C \times \mathbb{P}(U(\mathcal{T}^*)) &\longrightarrow \mathbb{P}(U(\mathcal{T}^*))\\
    (t,(z_1 : \dots : z_5)) &\longmapsto (z_1: e^{2\pi i x t} z_2 : e^{2\pi i t} z_3 : e^{2\pi i (y + i)t} z_4 : e^{-2\pi t} z_5).
\end{align*}
\begin{remark}
    From the point of view of Loeb and Nicolau, \(N\) is the complex manifold generated by the action of the holomorphic flow of the vector field \(X_{x,y}\) on  \(\proj(U(\calT^*)) \iso (\C^2\setminus\{0\}) \times (\C^2\setminus\{0\})\)
    \begin{equation*}
        X_{x,y} = \sum_{j=2}^5 (\Lambda_{x,y})_{j-1} z_{j} \frac{\del}{\del z_{j}}.
    \end{equation*}
\end{remark}

From the expression of \(\hat\Lambda^\R_{x,y}\), we readily compute the nonrationality degree of the quasilattice \(Q\) associated to the LVMB datum (or equivalently to \((V,\calT)\)). Indeed, it can be verified that
\begin{equation*}
    \deg_\text{NR}Q = b(V)-3 = 2- \max \left\{ \#\left(\Q \cap \left\{x,\frac{y}{x}\right\}\right), \#\left(\Q \cap \left\{y,\frac{y}{x}\right\}\right)\right\} \in \{0,1,2\}.
\end{equation*}
Since the initial LVMB datum has only one indispensable index, \(\alg{N}\) coincides with the rational dimension of \(N\), which equals
\begin{equation*}
    \alg{N} = a(\Lambda) =2 - \deg_\text{NR}Q = \max \left\{ \#\left(\Q \cap \left\{x,\frac{y}{x}\right\}\right), \#\left(\Q \cap \left\{y,\frac{y}{x}\right\}\right)\right\} \in \{0,1,2\}.
\end{equation*}
In particular, this coincides with the number of affine rational relations among the points in \(\Lambda\), recovering \cite[Cor.~4]{LoebNicolau1996}. Moreover, by \Cref{algebraic_reduction}, \(N^\text{red}\) is birational to \(\CP^{5 - b(V)}\), that, depending on the initial datum, is a point, \(\CP^1\), or \(\CP^2\).

\printbibliography

\noindent\begin{minipage}{\linewidth}
    \begin{center}
        \small
        \rule{4cm}{.5pt}
        \bigskip
        
        Università degli Studi di Firenze, Dipartimento di Matematica e Informatica ``Ulisse Dini'',\\ V.le Morgagni 67/a, 50134 Firenze, Italia
        
        email: \href{mailto:federico.thiella@unifi.it}{federico.thiella@unifi.it}
    \end{center}
\end{minipage}

\end{document}